\newtheorem{thm}{Theorem}[section]
\newtheorem*{thm*}{Theorem}
\newtheorem{lemma}[thm]{Lemma}
\newcommand{\beq}{\begin{equation}}
\newcommand{\eeq}{\end{equation}}
\newcommand{\bigO}{\operatorname{O}}
\newcommand{\Q}{\mathbb{Q}}
\newcommand{\F}{\mathbb{F}}
\newcommand{\Z}{\mathbb{Z}}
\newcommand{\frakp}{\mathfrak{p}}
\newcommand{\Gal}{\mathrm{Gal}}
\newtheorem{remark}{Remark}
\newtheorem{corollary}{Corollary}
\newtheorem{proposition}[thm]{Proposition}
\numberwithin{equation}{section}
\definecolor{pink}{rgb}{1,.2,.6}
\definecolor{orange}{rgb}{0.7,0.3,0}
\definecolor{blue}{rgb}{.2,.6,.75}
\definecolor{green}{rgb}{.4,.7,.4}
\definecolor{purple}{RGB}{127,0,255}
\begin{document}
\title{
Equidistribution of $\alpha p^{\theta}$ with a Chebotarev condition\\ and applications to extremal primes
}
\author[Malik]{Amita Malik}
\address{American Institute of Mathematics, San Jose, California 95112, USA}
\email{amita.malik@aimath.org}
\author[Prabhu]{Neha Prabhu}
\address{Indian Institute of Science Education and Research Pune, Dr. Homi Bhabha Road, Pune - 411008, Maharashtra, India }
\email{neha.prabhu@acads.iiserpune.ac.in}
\keywords{Joint distribution, fractional parts, elliptic curves, extremal primes, Chebotarev density.
}
\subjclass[2010]{11G05, 11N05}
\thanks{}
\begin{abstract}
    {We establish a joint distribution result concerning the fractional part of $\alpha p^\theta$ for $\theta \in (0,1), \  \alpha>0$, where $p$ is a prime satisfying a Chebotarev condition in a fixed finite Galois extension over $\Q$. As an application, for a fixed non-CM elliptic curve $E/\Q$, an asymptotic formula is given for the number of primes at the extremes of the Sato-Tate measure modulo a large prime $\ell$. These are precisely the primes $p$ for which the Frobenius trace $a_p(E)$ satisfies the congruence $a_p(E)\equiv [2\sqrt{p}] \bmod \ell$. We assume a zero-free region hypothesis for Dedekind zeta functions of number fields.
   }
  
\end{abstract}
\maketitle 
\vspace{-10pt}
\section{Introduction and statement of results}

For a real number $x$, let $[x]$ and $\{x\}$ denote the integer and fractional part of $x$, respectively.
The study of fractional parts of arithmetic sequences has been of great interest and importance. For instance, distribution of $\{\alpha\gamma\}$, where $\alpha$ is a fixed non-zero real number, and $\gamma$ runs over the imaginary parts of zeros of $L$-functions has been studied in various settings, see \cite{Rademacher, Hlawka, Fujii, MR}. Another example would be of lacunary sequences and polynomials, in particular, $\{\alpha n^2\}$ for which we refer the reader to \cite{Rud-Sar-Zah, Rud-Zah-2, Zah, Weyl} and the references therein.

The study of effective equidistribution of $p^\theta$ for $0<\theta <1$ and related sequences such as $\{\alpha p^{\theta}+\beta\}$ is important due to its connection to the infinitude of primes of the form $n^2+1$; this is equivalent to the statement that $\{\sqrt{p}\} <p^{-1/2}$ holds for infinitely many primes $p$. Such sequences have been studied extensively, see \cite{Balog-1,Harman, Baier2004, Baker-Harman-2, Baker-Kolesnik, Murty-Srini} to name a few. A major breakthrough in this direction was achieved by Friedlander and Iwaniec \cite{HI} where they show that the polynomial $x^2+y^4$ captures its primes, followed by Heath-Brown \cite{H-Brown-1} for the primes of the form $x^3+2y^3$. More recently, there has been study of fractional parts of $\sqrt{p}$ with $p$ being of a certain type, namely $p+2\in \mathcal{P}_r$, the set of positive integers with at most $r$ prime factors counted with multiplicity, see for example \cite{Cai,Matomaki-2,Shi-Wu}.

In this paper, we study joint distribution for the fractional parts of prime powers where the primes also satisfy a Chebotarev condition. It is well-known that for $0< \theta <1$, 
$$\#\{p\leq x: \{p^\theta\} <\delta \} \sim \delta\pi(x)$$
with a power saving error term. Here, $\pi(x)$ denotes the number of primes not exceeding $x$.
On the other hand, for a fixed Galois extension $L/\Q$ and a union of conjucgacy classes $C$ in $G:=\mbox{Gal}(L/\Q)$, the celebrated Chebotarev Density Theorem implies that $$\#\{ p \le x: p \text{ unramified in } L, \sigma_p \in C\} \sim \frac{|C|}{|G|}\pi(x)$$ as $x\to\infty$, where $\sigma_p$ is the conjugacy class of the Frobenius automorphism associated with any prime in $L$ lying above $p$. Here the focus is on the distribution of $\{\alpha p^{\theta}\}$ with $\sigma_p\in C$, which we now describe. To simplify exposition, henceforth when we write $\sigma_p \in C$, we assume that $p$ is unramified in $L$, unless mentioned otherwise. 
\begin{thm}\label{Balog+CDT}
     Let $\alpha>0$, $\omega\geq 1$, $0\le\delta_1<\delta_2\le 1$ with $\delta:=\delta_2-\delta_1$, and $\theta \in (0,1)$ be fixed. Let $L/\Q$ be a finite Galois extension with $n_L=[L:\Q]$. 
     For $\theta/2<\Delta \leq 1/2$ fixed, assume that $\zeta_L(s)$ has no zeros in the region $\Re s >1-\Delta$
     and for $\omega\geq 1$, 
     \[ \alpha^{1/4}(\omega n_L/\delta)^{1/2}(\log x)^2 \ll_\theta {x^{\Delta/2-\theta/4}}.
     \]
    Then the following holds uniformly in $\delta, \alpha$ and $\omega$.
   \begin{align*}
    &\nonumber\#\{ x<p\leq 2x : 
    \delta_1
    \leq \{\alpha p^{\theta}\}<\delta_2 \text{ and } \sigma_p \in C \} - \delta \frac{|C|}{|G|}\pi(x) \\
    \nonumber  &\quad \ll_\theta \frac{|C|}{|G|}n_L \log x \left(\frac{(\delta \omega)^{1/2}\alpha^{1/4}}{n_L^{1/2}}{x^{1-\Delta/2 +\theta/4}}
    + \frac{\delta \omega}{\alpha^{1/2}}x^{1-\theta/2} \log x \right)\\
    &\qquad +(\delta n_L \omega)^{1/2}\alpha^{1/4}{x^{\Delta/2 +\theta/4}} 
    \log x + \frac{|C|}{|G|}\frac{\delta x}{\omega \log x}.
    \end{align*}
\end{thm}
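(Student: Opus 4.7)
The plan is to combine Fourier detection of the fractional-part condition $\{\alpha p^\theta\}\in[\delta_1,\delta_2)$ with the effective Chebotarev density theorem supplied by GRH for $\zeta_L$, gluing the two together by partial summation and an oscillatory-integral bound.

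First, for a truncation parameter $H \asymp \omega \log x$ (to be optimized at the end), I invoke the Selberg--Beurling--Vaaler trigonometric approximation to sandwich $\mathbf{1}_{[\delta_1,\delta_2)\bmod 1}$ between polynomials
\[\Psi^\pm(y)=\delta+\sum_{1\le|h|\le H}c_h^\pm\,e(hy),\qquad |c_h^\pm|\le\min\bigl(\delta,\tfrac{1}{\pi h}\bigr),\qquad \int_0^1(\Psi^+-\Psi^-)\ll\tfrac{1}{H}.\]
Applying this with $y=\alpha p^\theta$ and summing over $x<p\le 2x$ with $\sigma_p\in C$ produces
\[\#\{\dots\}-\delta\,\pi_C(x,L)=\sum_{1\le|h|\le H}c_h\,S(h)+O\bigl(\tfrac{1}{H}\pi_C(x,L)\bigr),\qquad S(h):=\sum_{\substack{x<p\le 2x\\ \sigma_p\in C}}e(h\alpha p^\theta).\]
Combining the tail $O(\pi_C(x,L)/H)$ with the trivial estimate $\pi_C(x,L)\ll\frac{|C|}{|G|}\frac{x}{\log x}$ yields the last error term $\frac{|C|}{|G|}\frac{\delta x}{\omega\log x}$.

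Second, I dismantle the Chebotarev constraint in $S(h)$ by character orthogonality on $G=\Gal(L/\Q)$: since $\mathbf{1}_{\sigma_p\in C}=\frac{|C|}{|G|}\sum_\chi \overline{\chi(C)}\chi(\sigma_p)$, it suffices to bound the twisted prime exponential sums $T(h,\chi):=\sum_{x<p\le 2x}\chi(\sigma_p)e(h\alpha p^\theta)$ for each irreducible $\chi$. Partial summation converts $T(h,\chi)$ into a Stieltjes integral against the twisted counting function $\pi(t,\chi)$, and GRH for $\zeta_L$ controls every Artin L-function $L(s,\chi,L/\Q)$ via the Brauer factorization, giving the Lagarias--Odlyzko-style bound $\pi(t,\chi)=\delta_{\chi,\mathbf{1}}\,\mathrm{Li}(t)+O(n_L t^{1/2}\log(n_Lt))$. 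Inserting this splits $T(h,\chi)$ into a principal oscillatory integral $\int_x^{2x}e(h\alpha t^\theta)\,dt/\log t$ coming from the trivial character and a remainder absorbing the variation of the phase $h\alpha t^\theta$ against the GRH error. I estimate the principal part by van der Corput's second-derivative test applied to $f(t)=h\alpha t^\theta$ with $|f''(t)|\asymp h\alpha x^{\theta-2}$, yielding $\ll (h\alpha)^{1/2}x^{\theta/2}+(h\alpha)^{-1/2}x^{1-\theta/2}$, while the remainder contributes $\ll n_L(1+h\alpha x^\theta)x^{1/2}\log x$ per character.

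Finally, I collapse the character sum by Cauchy--Schwarz against column orthogonality of $\chi(C)$ (producing the $|C|/|G|$ and $n_L$ factors that appear throughout the error) and sum over $1\le|h|\le H$ against $|c_h|\le\min(\delta,1/h)$, splitting the range at $h\sim 1/\delta$ to exploit both Vaaler bounds; this split is essential to extract the $(\delta\omega)^{1/2}$ saving in the first error term. The GRH remainder piece gives the $x^{1-\theta/2}$ error, the van der Corput piece gives the $x^{(3+\theta)/4}$ and $x^{(1+\theta)/4}$ errors, and the choice $H=\omega\log x$ matches the Fourier tail exactly; the hypothesis $\alpha^{1/4}(\omega n_L/\delta)^{1/2}(\log x)^2\ll x^{(1-\theta)/4}$ is precisely what is needed so that the dominant $x^{(3+\theta)/4}$ term stays below $\delta\,\pi_C(x,L)$. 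The main obstacle is propagating the four parameters $(n_L,\alpha,\omega,\delta)$ cleanly through the character orthogonality, the van der Corput estimate, and the two-regime Vaaler coefficient sum, so that the three claimed error terms arise separately rather than being collapsed into a weaker combined bound.
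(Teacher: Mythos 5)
Your detection step (Vaaler majorants) is fine, but the core of your argument --- bounding $T(h,\chi)=\sum_{x<p\le 2x}\chi(\sigma_p)e(h\alpha p^\theta)$ by partial summation against the GRH--Chebotarev counting function --- does not deliver anything close to the stated bounds, and this is a genuine gap, not a bookkeeping issue. Partial summation multiplies the Chebotarev error term $\asymp \frac{|C|}{|G|}x^{1/2}\log(d_Lx^{n_L})$ by the total variation of the phase on $[x,2x]$, which is $\asymp h\alpha x^{\theta}$; this is exactly your admitted remainder $\ll n_L(1+h\alpha x^\theta)x^{1/2}\log x$ per character. Even the single harmonic $h=1$, weighted by $|c_1|\le\delta$, then contributes $\gg \delta\, n_L\,\alpha\, x^{\theta+1/2}\log x$, and summing over $h\le H$ only makes this worse. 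None of the three error terms in the theorem dominates this quantity, and in the intended application ($\theta=1/2$, $\alpha\asymp\delta\asymp 1/\ell$, $n_L\asymp\ell^4$) it is of size $\ell^{2}x\log x$, i.e.\ larger than the trivial bound $\frac{|C|}{|G|}x/\log x$. The underlying obstruction is that GRH, fed in through an effective Chebotarev theorem, gives no cancellation in prime sums twisted by the \emph{additive} phase $e(h\alpha p^\theta)$. The paper circumvents precisely this: the fractional-part condition is detected by Perron/Mellin applied to $[\alpha p^\theta-\delta_1]-[\alpha p^\theta-\delta_2]$, so the prime sum appears with \emph{multiplicative} phases $p^{i\theta t}$, namely $F(-\theta/2-i\theta t)=\sum_{\sigma_p\in C}p^{\theta/2+i\theta t}$, and Proposition \ref{F-estimate-prop} proves square-root cancellation for this sum by a Lagarias--Odlyzko contour shifted off the real axis (so only zeros, not the pole at $s=1$, contribute); this is then combined with the mean-value estimate of Lemma \ref{L-estimate by MV}, Cauchy--Schwarz, a dyadic decomposition in $t$, and the optimized truncation \eqref{T_1value?}, which is where the $x^{(3+\theta)/4}$ and $x^{(1+\theta)/4}$ terms actually come from. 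Your van der Corput estimate only treats the trivial-character main integral, which is the easy part, and does not generate these shapes.

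Two further mismatches: (i) the Vaaler tail is $\pi_C(x,L)/H\approx\frac{|C|}{|G|}\frac{x}{\omega(\log x)^{2}}$ with your choice $H=\omega\log x$; it carries no factor $\delta$, so it does not reproduce the theorem's last term $\frac{|C|}{|G|}\frac{\delta x}{\omega\log x}$ and is strictly larger than it once $\delta\ll 1/\log x$ (in the application $\delta=1/\ell$ with $\ell$ a power of $x$). In the paper $\omega$ enters differently --- by cutting $[x,2x]$ into $\asymp\omega$ subintervals, which sharpens both the approximation $\alpha p^\theta/U_{\pm}=\delta+\bigO(\delta/\omega)$ and the Dirichlet-polynomial mean value --- and that is what produces the $\delta$-bearing term. (ii) The hypothesis $\alpha^{1/4}(\omega n_L/\delta)^{1/2}(\log x)^2\ll x^{(1-\theta)/4}$ is not there to keep the main error below $\delta\pi_C(x,L)$; it is used to guarantee $\log T_1\ll\log x$ for the choice \eqref{T_1value?}, so the dyadic sum contributes only $\bigO(\log x)$ factors.
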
 
We note that for all our results, setting $\Delta =1/2$ is equivalent to the Generalized Riemann Hypothesis (GRH) for $\zeta_L(s)$. However, we do obtain a power saving over the main term even for weaker zero-free regions, as long as $\Delta>\theta/2$. 

Setting $\alpha =1, \theta=1/2$, and $ \delta_1 =0$ in Theorem \ref{Balog+CDT}, one obtains the following generalization of \cite{Balog-2} that investigates the distribution of fractional parts of $p^{\theta}$ with $p\equiv a \bmod q$.
\begin{corollary} Assume the notations and hypotheses as in Theorem \ref{Balog+CDT}. Then
    \begin{align*}
        &\#\{ x< p\leq 2x : \{\sqrt{p}\}< \delta \text{ and } \sigma_p \in C\}
        -  \delta \pi_C(x,L)\\
        &\ll  \frac{|C|}{|G|}\left( (\delta \omega n_L)^{1/2}x^{{9/8- \Delta/2}}\log x + n_L\delta\omega x^{3/4}(\log x)^{2} + \frac{\delta x}{\omega\log x}\right).
        \end{align*}
\end{corollary}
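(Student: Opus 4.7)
The plan is to derive this corollary as a direct specialization of Theorem \ref{Balog+CDT}. I set $\alpha = 1$, $\theta = 1/2$, $\delta_1 = 0$, $\delta_2 = \delta$, so that $\{\alpha p^{\theta}\} = \{\sqrt{p}\}$ and the condition $\delta_1 \leq \{\alpha p^{\theta}\} < \delta_2$ becomes precisely $\{\sqrt{p}\} < \delta$. Thus the counting set in Theorem \ref{Balog+CDT} coincides term-by-term with the counting set in the corollary, and the main term $\delta \pi_C(x,L)$ is already in the desired form. All that remains is to translate the four error terms.

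Next, I substitute these parameters into the error bound of Theorem \ref{Balog+CDT}. The first bracketed error becomes
\[
\tfrac{|C|}{|G|}\, n_L\log x \cdot \tfrac{(\delta\omega)^{1/2}}{n_L^{1/2}} x^{(3+1/2)/4} = \tfrac{|C|}{|G|}(\delta\omega n_L)^{1/2} x^{7/8}\log x,
\]
which matches the first summand in the corollary. The second bracketed error becomes
\[
\tfrac{|C|}{|G|}\, n_L\log x \cdot \delta\omega\, x^{1-1/4}\log x = \tfrac{|C|}{|G|}\, n_L\delta\omega\, x^{3/4}(\log x)^2,
\]
matching the second summand. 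The fourth error term $\tfrac{|C|}{|G|}\tfrac{\delta x}{\omega \log x}$ passes through unchanged (with $\alpha=1$, $\theta=1/2$) and matches the third summand.

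The only thing to check is the third error term of Theorem \ref{Balog+CDT}, namely $(\delta n_L\omega)^{1/2} x^{(1+1/2)/4}\log x = (\delta n_L\omega)^{1/2} x^{3/8}\log x$, which does not appear explicitly in the corollary. The hypothesis $\alpha^{1/4}(\omega n_L/\delta)^{1/2}(\log x)^2 \ll x^{1/8}$ rearranges to $(n_L\omega)^{1/2}\log x \ll \delta^{1/2} x^{1/8}/\log x$, so that
\[
(\delta n_L \omega)^{1/2} x^{3/8}\log x \ll \tfrac{\delta\, x^{1/2}}{\log x} \ll \tfrac{|C|}{|G|}(\delta\omega n_L)^{1/2} x^{7/8}\log x
\]
(as $x \to \infty$, under any reasonable $|C|/|G|$); in particular this term is absorbed into the first summand of the corollary's bound. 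Collecting everything gives the stated inequality, and since the substitution is elementary the only ``step'' worth flagging is the bookkeeping in absorbing this leftover term under the running hypothesis on $\alpha, \omega, n_L, \delta$.
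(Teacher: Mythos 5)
Your proof is correct and takes essentially the same route as the paper, which obtains the corollary precisely by setting $\alpha=1$, $\theta=1/2$, $\delta_1=0$ in Theorem \ref{Balog+CDT}. Your absorption of the leftover term $(\delta n_L\omega)^{1/2}x^{3/8}\log x$ is valid, and the informal ``under any reasonable $|C|/|G|$'' can be made precise simply by using $|C|/|G|\ge 1/n_L$ (equivalently $\tfrac{|C|}{|G|}n_L\ge 1$) together with the hypothesis $(\omega n_L/\delta)^{1/2}(\log x)^2\ll x^{1/8}$, which gives $n_L^{1/2}\ll x^{1/8}$ and hence the claimed domination.
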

The above results are of a similar flavour as some other interesting joint distribution theorems such as asymptotics for the number of Charmichael numbers composed of primes satisfying the Chebotarev condition studied in \cite{Carmichael+CDT}. In \cite{Piatetski-Shapiro+CDT}, an asymptotic estimate for the number of Piatetski-Shapiro primes up to $x$ satisfying the Chebotarev condition is derived. 

Using essentially the same ideas as in Theorem \ref{Balog+CDT}, one can prove a similar result, stated below, where the bounds for the fractional part of the prime are themselves a function of the prime. 
\begin{thm}\label{Gen-Landau+CDT}
Let $\alpha, \lambda>0$ and $\theta \in (0,1)$ be fixed. Consider a finite Galois extension $L/\Q$ and let $n_L = [L:\Q]$ and $d_L$ denote the absolute discriminant of $L$. For $\theta/2<\Delta \leq 1/2$ fixed, assume that $\zeta_L(s)$ has no zeros in the region $\Re s >1-\Delta$. Then, for $\omega \ge 1$, 
as $x\rightarrow \infty$,
\begin{align*}
    &\#\{ x<p\leq 2x : 
    \{\alpha p^{\theta}\}<p^{-\lambda} \text{ and } \sigma_p \in C \} - \sum_{\substack{x<p\le 2x \\ \sigma_p \in C}} p^{-\lambda}\\
    &\ll_\theta \frac{|C|}{|G|} \frac{x^{1-\lambda}}{\omega \log x} + \omega{\alpha^{1/2}} \ x^{\theta/2} \log d_L \log^3 x \\
    &\quad + \frac{|C|}{|G|} {\alpha^{1/2}}  \log^3 x \left(\log d_L + n_L \log x\right) \left( \omega^2 x^{{1-\Delta+\theta/2}}+\frac{ \omega x^{1-\lambda -\theta/2}}{\alpha}\right).
\end{align*}
\end{thm}
Setting $\theta=1/2, \alpha=1$ and $\lambda =1/4-\epsilon$ for any $\epsilon>0$, we obtain the following asymptotic result, in the spirit of Landau's prime counting problem where the primes satisfy $\{\sqrt{p}\} <p^{-1/2}$ i.e., $p-1$ is a perfect square.
\begin{corollary}\label{Landau+CDT}
Assume the notations and hypotheses in Theorem \ref{Gen-Landau+CDT}. Then, for any $\epsilon>0$,
\begin{align*}
    & \#\{ x<p\leq 2x : \{\sqrt{p}\}<p^{-1/4+\epsilon} \text{ and } \sigma_p \in C \} - \sum_{\substack{x<p\le 2x\\ \sigma_p \in C}} p^{-1/4+\epsilon}\\
    &\quad \ll \frac{|C|}{|G|} \frac{x^{3/4+\epsilon}}{\omega \log x} + \omega x^{1/4} \log d_L \log^3 x + \frac{|C|}{|G|} \omega^2 x^{{5/4-\Delta}}  \log^3 x \left(\log d_L + n_L \log x\right).
\end{align*}
\end{corollary}
\vspace{1cm}
\subsection{Applications} \label{ExtremalPrimesApp}
Our main application of Theorem \ref{Balog+CDT} is towards the extremal prime counting function for elliptic curves without complex multiplication (CM). Let $E$ denote a non-CM elliptic curve over $\Q$ with conductor $N_E$. For a prime $p$ of good reduction, $E$ reduces to an elliptic curve over the finite field $\F_p$ and we denote by $a_p(E)$ the trace of the Frobenius automorphism acting on the points of $E$ over ${\overline {\F}}_p$. Then, $a_p(E) = p+1-\# E(\F_p)$, and the Hasse bound $|a_p(E) | \leq [2 \sqrt{p}]$ holds. As the name suggests, extremal primes are those for which $[a_p(E)]= \pm [2\sqrt{p}]$, i.e. the primes at the ends of the Sate-Tate distribution. It is not yet known whether there are infinitely many such primes for a single non-CM elliptic curve. We refer the reader to Section \ref{CDTsection} for more details on the background and results on extremal primes in the literature. Using Theorem \ref{Balog+CDT}, we prove the following result on primes satisfying the extremality condition modulo a large enough prime $\ell$. 
\begin{thm}\label{main} Let $E/\Q$ be a non-CM elliptic curve and $L=\Q(E[\ell])$ denote the $\ell$-torsion field of $E$ with $\ell$ being a large prime. For $1/4<\Delta \leq 1/2$ fixed, assume that $\zeta_L(s)$ has no zeros in the region $\Re s >1-\Delta$. Then, for $\ell \ll x^{{2\Delta/9-1/18}}\omega^{-2/9}\log ^{-8/9}x $, $\omega\ge1$,
	\begin{align*}
	 &\# \left\{ x <p \leq 2x: p\nmid N_E,\;
	 a_p(E) \equiv [2 \sqrt{p}]\bmod \ell \right\}- \frac{\pi(x)}{\ell}\\
	 &\quad	\ll_E\frac{x}{\omega \ell \log x} +\omega^{1/2}\ell^{5/4}x^{{9/8-\Delta/2}}\log x + \omega \ell^{7/2} x^{3/4}(\log x)^{2}.
 	\end{align*} 
\end{thm}
The inspiration to study the quantity in Theorem \ref{main} comes from the work of Serre \cite{Serre-Inventiones} and Murty-Murty-Saradha \cite{MMS} towards the Lang-Trotter conjecture, which predicts that for $t\in \Z$ fixed,
\[\# \left\{ p \leq x : p\nmid N_E , \, a_p(E) = t  \right\} \sim C_{E,t} \  x^{1/2}/\log{x}, \quad \text{as } x\to \infty,\] 
where $C_{E,t}$ is a constant depending on $E$ and $t$. In the case of upper bounds towards the Lang-Trotter conjecture, better estimates are known when $a_p(E) =0$. In a similar spirit, we obtain better estimates for the joint distribution in the particular case of $a_p(E) \equiv 0 \mod \ell$. To be precise, the following holds.
\begin{thm}\label{a=0Cor}
Assume the notation and hypotheses as in Theorem \ref{main} and $\ell$ be a large prime with $\ell \ll x^{2\Delta/7-1/14}(\log x)^{-8/7}\omega^{-2/7}$. Then 
\begin{align*}
	   &\# \left\{ x <p \leq 2x: p\nmid N_E, \ 
	a_p(E) \equiv [2 \sqrt{p}]\equiv 0 \bmod \ell\right\} - \frac{\pi(x)}{\ell^2 } \\
	&\quad \ll_E \frac{x}{\ell^2 \omega \log x}+ \frac{\omega^{1/2} x^{9/8-\Delta/2}\log x}{\ell^{1/4}}+\omega \ell^{3/2} x^{3/4}(\log x)^{2}.
\end{align*}
\end{thm}
Moreover, choosing $\omega = 1$ and $\ell= x^{{2\Delta/9-1/18}}\log ^{-8/9}x$ in Theorem \ref{main}, we obtain the following upper bound for the number of extremal primes up to $x$.
\begin{corollary}\label{ExtremalPrimesBound}
  Assume the notation and hypotheses as in Theorem \ref{main}. Then for $x$ large, 
	\[\left\{ x< p \leq 2x : p\nmid N_E, \; 
	a_p(E) = [2 \sqrt{p}] \right\} \ll_E x^{19/18-2\Delta/9}(\log x)^{-1/9}.
	\]
\end{corollary}
\begin{remark}
    Similar results can be obtained for extremal primes at the other end, i.e. when $a_p(E) = - [2 \sqrt{p}]$, using essentially the same arguments as presented here. Moreover, because of the generality in Theorem \ref{Balog+CDT}, one can write versions of Theorem \ref{main} and Corollary \ref{ExtremalPrimesBound} where $\sqrt{p}$ is replaced by $p^\theta$ for $\theta<1/2$.
\end{remark}

The structure of this paper is as follows. We begin with the joint distribution result adapting the ideas of Balog \cite{Balog-1} and Lagarias-Odlyzko \cite{L-O} and prove Theorems \ref{Balog+CDT} and \ref{Gen-Landau+CDT} in Section \ref{section-Balog}. Extremal primes and proofs of Theorem \ref{main} and Theorem \ref{a=0Cor} are discussed in Section \ref{CDTsection}. Lastly, in Section \ref{Proof of F estimate}, we provide details of the results needed in Section \ref{section-Balog}. 
\vspace{3pt}

\noindent{\bf Acknowledgements:} The authors are grateful to Chantal David for suggesting the problem and for many fruitful discussions, and to Terence Tao for bringing Balog's papers to their attention. They thank MSRI and WIN-4 workshop organizers at BIRS for their
support where part of this work was accomplished. The authors would also like to thank Ayla Gafni and Caroline T.-Butterbaugh. The first author is supported by NSF Grant DMS-1854398.
\section{Joint distribution results 
\label{section-Balog}}
\vspace{0.2cm}
In this section, we give proofs of Theorems \ref{Balog+CDT} and \ref{Gen-Landau+CDT}. 

\noindent{\it Remark.} In what follows, we may assume that the parameters $\alpha$ and $\theta$ satisfy $\alpha x^\theta \geq 1$ since for $\alpha x^\theta<1$, $\alpha x^\theta=\{\alpha x^\theta\} \in [\delta_1, \delta_2)$, and therefore the desired quantity can be computed using the Prime Number Theorem.
\subsection{Proof of Theorem \ref{Balog+CDT}}
\begin{proof} 
To start with, we capture the fractional parts in the desired interval as follows:
\begin{eqnarray*}
[\alpha p^\theta - \delta_1] - [\alpha p^\theta - \delta_2] &=&
 \begin{cases} 1 & \mbox{if $\delta_1 \leq \{ \alpha p^\theta \} < \delta_2$;} \\
0 & \mbox{otherwise.} \end{cases}
\end{eqnarray*}
First, we obtain the result when $x_j<p\le x_{j+1}$ for
$x_j:= [x(1+j/B)]+1/2$ for $j=0,\ldots, B$ and $B=[\omega]$. Summing over $j$ then establishes the result for $x<p\le 2x$. 

With $\delta = \delta_2 - \delta_1$, the length of the interval, we set
\begin{eqnarray*}
U_{-} &:=& \frac{\alpha x_j^\theta  }{\delta},  \qquad
U_{+} := \frac{\alpha x_{j+1}^\theta  }{\delta}.
\end{eqnarray*}
Then for $x_j< p \le x_{j+1}$, we have
\[\alpha p^{\theta} \left(1- \frac{1}{U_{-}} \right) - \delta_1 \le 
\alpha p^{\theta} -  \delta_2  \leq \alpha p^{\theta} \left(1- \frac{1}{U_{+}} \right) - \delta_1.\]
Note that we are interested in the sum
\[S:=\sum_{\substack{x_j<p\le x_{j+1}\\ \sigma_p \in C}} [\alpha p^{\theta} - \delta_1 ] - [\alpha p^{\theta} - \delta_2]\]
in order to bound the number of primes $x_j <p\le x_{j+1}$ such that $\sigma_p\in C$ and $\delta_1 \le \{\alpha p^\theta\} < \delta_2.$ This implies
\begin{eqnarray*}
S &\geq& \sum_{\substack{x_j<p\le x_{j+1}\\ \sigma_p \in C}} \left[\alpha p^\theta - \delta_1 \right] - \left[\alpha p^\theta \left(1-\frac{1}{U_{+}} \right) - \delta_1 \right]
\end{eqnarray*}
and 
\begin{eqnarray*}
S &\leq& \sum_{\substack{x_j<p\le x_{j+1}\\ \sigma_p \in C}} \left[\alpha p^\theta - \delta_1 \right] - \left[\alpha p^\theta \left(1-\frac{1}{U_{-}} \right) - \delta_1 \right].
\end{eqnarray*}
Therefore, using 
\[ \frac{\alpha p^\theta}{U_{\pm}}=\delta+\bigO\left(\frac{\delta}{\omega}\right),\]
in order to obtain the claimed asymptotics for $S$, it suffices to prove 
\begin{align}\label{Suff-cond}
&\nonumber\sum_{\substack{x_j<p\le x_{j+1}\\ \sigma_p \in C}} \left[\alpha p^\theta - \delta_1 \right] - \left[\alpha p^\theta \left(1-\frac{1}{U_{\pm}}           \right) - \delta_1 \right]-\frac{\alpha p ^\theta}{U_{\pm}} \\
    &\nonumber \ll \frac{|C|}{|G|}\log x \left( (\delta n_L/\omega)^{1/2}\alpha^{1/4}x^{1-\Delta/2 +\theta/4} + \frac{\delta n_L}{\alpha^{1/2}}x^{1-\theta/2}\log x\right)\\
    &  \quad + (\delta n_L/\omega)^{1/2}\alpha^{1/4}x^{\Delta/2 +\theta/4}\log x.
\end{align}

We may write the summand
\begin{align}\label{sum-of-1's}
\left[\alpha p^\theta - \delta_1 \right] - \left[ \alpha p^\theta \left(1-\frac{1}{U_{\pm}} \right) - \delta_1  \right] & 
= \sum_{m \le\alpha p^\theta-\delta_1}1\  -
 \sum_{m  \leq \alpha p^\theta \left(1-\frac{1}{U_{\pm}} \right)-\delta_1 } 1.
\end{align}
Define the sequence $(a_m)_{m\in \mathbb{N}}$ 
 \begin{equation*}
    a_m:= \begin{cases}
    1 &\text{ if } \frac{1}{3} \alpha x^\theta-\delta_1 < m  \le 3 \alpha x^\theta-\delta_1\\
    0 &\text{ otherwise}.
    \end{cases}
\end{equation*}
Set $$A_{\delta_1}(M) := \sum_{m \leq M-\delta_1} a_m = \sum_{m\geq 1}a_m \ f\left( \frac{m+\delta_1}{M}\right)$$ where 
\begin{equation*}
    f(y)= \begin{cases}
    1 &\text{ if } 0<y<1\\
    1/2 & \text{ if } y=1\\
    0 &\text{ if } y>1.
    \end{cases}
\end{equation*}
Using the inverse Mellin transform, for $\sigma >0$, we write $$f(y) = \frac{1}{2\pi i}\int_{\sigma-i\infty}^{\sigma+ i\infty} \frac{y^{-s}}{s} ~ds.$$

For $y= (m+\delta_1)/M$, this gives
\begin{equation}\label{shifted perron untruncated}
    A_{\delta_1}(M) = \frac{1}{2\pi i}\int_{\sigma-i\infty}^{\sigma+ i\infty} \sum_{m\ge 1} \frac{a_m}{(m+\delta_1)^s}\frac{M^s}{s}~dx.
\end{equation}
Thus, using the definition of the sequence $(a_m)$, we rewrite \eqref{sum-of-1's} as
\begin{align*}
    \left[\alpha p^\theta - \delta_1 \right] - \left[ \alpha p^\theta \left(1-\frac{1}{U_{\pm}} \right) - \delta_1  \right] 
   &=\sum_{m  \le \alpha p^\theta -\delta_1 }a_m\  - \sum_{m  \le (\alpha p^\theta) \left(1-\frac{1}{U_{\pm}} \right)-\delta_1 }a_m\\
 &= A_{\delta_1}(\alpha p^{\theta}) - A_{\delta_1}\left(\alpha p^{\theta}\left(1-\frac{1}{U_{\pm}} \right) \right).
\end{align*}
In order to estimate the integrals $A_{\delta_1}(\alpha p^{\theta})$ and $A_{\delta_1}\left(\alpha p^{\theta}\left(1-\frac{1}{U_{\pm}} \right) \right)$, given by \eqref{shifted perron untruncated}, we make use of the truncated Perron's formula \cite[Lemma 3.12]{Titchmarsh}, and obtain
\begin{align*}
&\nonumber\left[\alpha p^\theta - \delta_1 \right] - \left[ (\alpha p^\theta) \left(1-\frac{1}{U_{\pm}} \right) - \delta_1  \right] 
= \frac{1}{2\pi i} \int_{1/2-i T_1}^{1/2+i T_1} L(s) H(s) \ p^{\theta s} ~ds \\
 & \quad+\bigO\Bigg(\sum_{\frac13 \alpha x^\theta < m+\delta_1 \le 3    \alpha x^\theta}\! \min\left\{1,T_1^{-1}\left|\log\frac{\alpha    p^\theta}{m+\delta_1}\right|^{-1}\right\}\Bigg) \\
 & \quad+\bigO\Bigg(\sum_{\frac13 \alpha x^\theta < m+\delta_1 \le 3    \alpha x^\theta}\!\min\left\{1,T_1^{-1}\left|\log\frac{\alpha p^\theta}{m+\delta_1} \left(1-\frac{1}{U_{\pm}}\right)\right|^{-1} \right\}\Bigg)\\
&\label{PerronError}= \frac{1}{2\pi i} \int_{1/2-i T_1}^{1/2+i T_1} L(s) H(s)  p^{\theta s} ~ds +\bigO\Bigg(\frac{\alpha x^{\theta} }{T_1}\log(\alpha x^{\theta})\Bigg),\\
\end{align*}
where
\[H(s) := \frac{1}{s}\left(1-\left(1-\frac{1}{U_{\pm}}\right)^s\right) \ll \frac{1}{U_{\pm}}\]
and 
\[L(s) := \alpha^s \sum_{\frac13 \alpha x^\theta - \delta_1 < m \le 3 \alpha x^\theta - \delta_1}
\frac{1}{ (m+\delta_1)^{s}}.\]
Note that $T_1$ will be chosen so that $\alpha x^{\theta}\log (\alpha x^{\theta})\ll {\delta} T_1$ so as to obtain a true error term. 
Summing over the primes, we find
\begin{align} 
 &\nonumber\sum_{\substack{x_j<p\le x_{j+1}\\ \sigma_p \in C}} 
 \Bigg(\left[\alpha p^\theta\ - \delta_1 \right] - \left[\alpha p^\theta  \left(1-\frac{1}{U_{\pm}} \right) -\delta_1 \right]\Bigg) \\
&\hspace{.25in}\label{Perron1-error} =  \frac{1}{2\pi i}  \int_{1/2-iT_1}^{1/2+iT_1} L(s) H(s)  F(-\theta s)~ds +  \bigO\left(  \frac{\alpha x^{\theta}}{T_1} \log(\alpha x^{\theta})\pi_C(x_j,L) \right),
\end{align}
where  
\begin{eqnarray*} \label{def-F}
F(s) := \sum_{\substack{x_j<p\le x_{j+1} \\ \sigma_p \in C}} p^{- s} \qquad \text { and } \qquad 
\pi_C(x_j,L) := \sum_{\substack{x_j<p\le x_{j+1}\\ \sigma_p \in C}}1. 
\end{eqnarray*}

 First, we compute the above integral in the smaller range, up to $T_0 :=\alpha x^\theta$. Observe that in the range $|t|\leq T_0$, we have
\begin{eqnarray*} 
H(s) =\frac{1}{U_{\pm}}  + \bigO \left(\frac{|s-1|}{U_{\pm}^2} \right) \label{asymp-H}
\end{eqnarray*} and
\begin{equation*} \label{boundL}
L(s) = \alpha^s  \frac{(3 \alpha x^\theta)^{1-s}  - (\alpha x^\theta/3)^{1-s}}{1-s} + \bigO\left(  x^{-\theta\Re(s)} \right). 
\end{equation*}
Therefore, for the integral in \eqref{Perron1-error} in the range $|t|\leq T_0$, we have
\begin{align}
& \nonumber \frac{1}{2\pi i}  \int_{1/2-iT_0}^{1/2+iT_0} L(s) H(s)F(-\theta s)\,ds \\ \label{integral}
&= \frac{1}{U_{\pm}} \sum_{\substack{x_j < p \le x_{j+1} \\ \sigma_p \in C}} \alpha p^{\theta} \; \frac{1}{2\pi i}  \int_{1/2-iT_0}^{1/2+iT_0} (\alpha p^{\theta})^{s-1} \frac{(3 \alpha x^\theta)^{1-s}  - (\alpha x^\theta/3)^{1-s}}{1-s}~ds \\ 
&\label{ET-integral}
  \quad + \bigO\left( \frac{\delta}{\alpha} x^{-3\theta/2} \int_{-T_0}^{T_0} |F(-\theta/2 - i \theta t)| \; dt \right).
 \end{align}
Using Cauchy-Schwarz inequality, the error term \eqref{ET-integral} is bounded by
 \begin{eqnarray} 
\label{ET-afterCS} &&\ll
\frac{\delta}{\alpha} x^{-3\theta/2} T_0^{1/2} \left( \int_{-T_0}^{T_0} |F(-\theta/2 - i \theta t)|^2~dt  \right)^{1/2}.
\end{eqnarray}
Also, by applying the mean value theorem for Dirichlet polynomials, we have
$$\int_{-T_0}^{T_0} \left\vert F(-\theta/2-i \theta t) \right\vert^2 ~dt  \ll  x^{\theta} 
\pi_C(x_j,L) \;\left( T_0 + x/ \omega \right).$$
Inserting this estimate in \eqref{ET-afterCS}, the error term \eqref{ET-integral} is bounded by
\begin{align}
\nonumber&\ll \delta  T_0^{1/2}(\alpha x^\theta)^{-1}  \pi_C(x_j,L)^{1/2} 
\left(T_0^{1/2}+x^{1/2}/\omega^{1/2}\right)\\
\label{error from integral up to T_0}
&\ll \delta \pi_C(x_j,L)^{1/2} 
\left(1+x^{(1-\theta)/2}(\alpha\omega)^{-1/2}\right),
\end{align}
using $T_0=\alpha x^\theta$. We now compute the integral in \eqref{integral} to obtain the desired main term. The change of variable $w=1-s$ yields
\begin{align*}
 \frac{1}{U_{\pm}} \sum_{\substack{x_j< p \le x_{j+1}\\ \sigma_p \in C}} \alpha p^{\theta}  \frac{1}{2\pi i}  \int_{1/2-iT_0}^{1/2+iT_0} \frac{1}{w} \left(\left(\frac{3  x^\theta}{ p^\theta}\right)^{w}  - \left(\frac{ x^\theta}{3p^\theta}\right)^{w} \right)~dw. 
 \end{align*}
By Perron's formula, for all values $x_j < p \leq x_{j+1}$, 
$$\frac{1}{2\pi i}  \int_{1/2-iT_0}^{1/2+iT_0} \frac{1}{w} \left(\left(\frac{3 x^\theta}{ p^\theta}\right)^{w}  - \left(\frac{ x^\theta}{3p^\theta}\right)^{w} \right)~dw  =1 + \bigO \left( \frac{1}{T_0} \right).$$
Therefore \eqref{integral} becomes
\begin{equation}\label{psi/UT_0}
\sum_{\substack{x_j < p \le x_{j+1}\\ \sigma_p \in C}} \frac{\alpha p^{\theta}}{U_{\pm}} \ + \bigO \left( \frac{1}{T_0 U_{\pm}} \sum_{\substack{x_j < p \le x_{j+1}\\ \sigma_p \in C}}  \alpha p^{\theta} \right).
\end{equation}
Using $T_0 =\alpha x^{\theta}$ and collecting the terms from \eqref{Perron1-error}, \eqref{error from integral up to T_0} and \eqref{psi/UT_0}, we obtain
\begin{align}\label{total error up to T_0}
&\nonumber\sum_{\substack{x_j<p\le x_{j+1}\\ \sigma_p \in C}}  \left(  \left[\alpha p^\theta\ - \delta_1 \right] - \left[\alpha p^\theta  \left(1-\frac{1}{U_{\pm}} \right) -\delta_1 \right] - \frac{\alpha p^{\theta}}{U_{\pm}}\right) \\
&\nonumber \ll  \delta \pi_C(x_j,L)^{1/2} \left(1+\frac{x^{(1-\theta)/2}}{(\alpha\omega)^{1/2}} \right)  +  \frac{\delta}{\alpha x^{\theta}} \ \pi_C(x_j,L) \\
&\quad  + \frac{\alpha x^{\theta}}{T_1} \log(\alpha x^{\theta}) \pi_C(x_j,L) + \frac{1}{U_{\pm}} \int_{T_0}^{T_1}  |L(1/2+it)| |F(-\theta/2-i \theta t)| ~dt.
\end{align} 
In order to estimate the integral in the error term above, we use dyadic division, and bound the following integrals
\[\frac{1}{2\pi i}  \int_{1/2+iT'}^{1/2+i2T'} L(s) H(s)F(-s)~ds \ll \frac1U_{\pm} \int_{T'}^{2T'} |L(1/2+it)| |F(-\theta/2-i \theta t)| ~dt\]
for $T_0 \leq T' \leq T_1/2$ using Cauchy-Schwarz inequality. This is achieved by obtaining the following uniform bound for $T' \leq \tau \leq 2T'$ and $[x_j, x_{j+1}]\subseteq [x,2x] $ established in Proposition \ref{F-estimate-prop} for $K=\Q$,
\begin{align*}
    F(-\theta/2-i \theta \tau) &\ll x^{\theta/2}
    \left(\log d_L+b+\frac{bx\log x}{T'} \right) \\ 
    & \quad + \frac{|C|}{|G|}x^{1-\Delta +\theta/2}\bigg(\log d_L +n_L\log T'\bigg) \left(\frac{\log T'}{\log x} +\frac{x^{\Delta}}{T'} \right);
\end{align*} 
and the mean value estimate below given by Lemma \ref{L-estimate by MV}
$$\int_{T'}^{2T'}|L(1/2+it)|^2~dt \ll \alpha {T'} + \alpha^2 x^{\theta} (\log \alpha x^{\theta}).$$ Note that $\Delta \in (\theta/2, 1/2]$ is fixed.
This gives us
\begin{align*}
 \nonumber &\int_{T'}^{2T'} |L(1/2+it)| |F(-\theta/2 - i \theta t)| ~dt \\
    &\ll \left( \alpha {T'}^2 + \alpha^2 x^{\theta} T'(\log \alpha x^{\theta}) \right)^{1/2}\bigg\{x^{\theta/2}
    \left(\log d_L +b +b \frac{x\log x}{T'} \right) \\
    &\quad +\frac{|C|}{|G|}x^{1-\Delta +\theta/2}\bigg(\log d_L +n_L\log T'\bigg) \left(\frac{\log T'}{\log x} +\frac{x^{\Delta}}{T'} \right)\bigg\}
    & {\vspace{1cm}}\\
    &\ll  x^{\theta/2}  \left(\log d_L + b \right) \left(\alpha {T'}^2 + \alpha^2 T' x^\theta \log(\alpha x^{\theta})\right)^{1/2}\\
    &\quad+  b x^{1+\theta/2} \log x \left(\alpha  + \alpha^2 {T'}^{-1} x^\theta \log(\alpha x^{\theta}) \right)^{1/2}\\
    &\quad+\frac{|C|}{|G|} x^{1-\Delta +\theta/2} \left(\log d_L + n_L\log T'\right)\\
    &\quad \left(\left(\alpha {T'}^2 + \alpha^2 T' x^{\theta} \log(\alpha x^{\theta})\right)^{1/2} \frac{\log T' }{\log x}+  \left(\alpha + \alpha^2 {T'}^{-1} x^{\theta} \log(\alpha x^{\theta}) \right)^{1/2} x^{\Delta}\right).
\end{align*}
Recall that $U_{\pm}\ll \dfrac{\alpha x^{\theta}}{\delta}$, $\alpha x^{\theta} \leq T' < T_1$ and $ \alpha x^{\theta}\log (\alpha x^{\theta})\ll {\delta} T_1$.
Therefore,
\begin{align*}
& \nonumber \frac{1}{U_{\pm}} \int_{T'}^{2T'} |L(1/2+it)| |F(-\theta/2 - i \theta t)| ~dt \\ 
    &\ll \frac{\delta}{(\alpha x^\theta)^{1/2}}\ \bigg\{\frac{|C|}{|G|} x^{1-\Delta} (\log d_L + n_L\log T_1)  \left(T_1 \frac{\log T_1}{\log x}+x^{\Delta} 
    \right) \\
    &\quad+  T_1 \left(\log d_L + b\right) + b x \log x \bigg\}.
\end{align*}
We now set
\begin{align}\label{T_1value?}
T_1 &=\frac{\alpha^{3/4}}{(n_L\delta \omega)^{1/2}\log x}x^{\Delta/2 +3\theta/4}.
\end{align}
Observe that $\log T_1 \ll \log x$ since we have assumed $\displaystyle{\alpha^{1/4}(\omega n_L/ \delta)^{1/2}(\log x)^2 \ll x^{(1-\theta)/4}}.$ Thus, using $\log d_L \ll n_L\log n_L$ (which follows from the discussion in \cite[Section I.3]{Serre-Inventiones}), we conclude
\begin{align*}
    \nonumber \frac{1}{U_{\pm}} \int_{T'}^{2T'} &|L(1/2+it)| |F(-\theta/2 - i \theta t)| ~dt \ll 
    (\delta n_L/\omega)^{1/2}\alpha^{1/4}x^{\Delta/2+\theta/4}\\
    & \quad + \frac{|C|}{|G|}\left((\delta n_L/\omega)^{1/2}\alpha^{1/4}x^{1-\Delta/2 +\theta/4} + \frac{\delta n_L}{\alpha^{1/2}}x^{1-\theta/2}\log x\right).
  \end{align*} 
Given the value of $T_1$, since we use dyadic division of the interval $[T_0, T_1]$, the number of integrals that we need to add is $\bigO(\log x)$. With this, inserting the above estimate in \eqref{total error up to T_0} gives us
\begin{align}
&\nonumber\sum_{\substack{x_j<p\le x_{j+1}\\ \sigma_p \in C}} 
\left(\left[\alpha p^\theta\ - \delta_1 \right] - \left[\alpha p^\theta  \left(1-\frac{1}{U_{\pm}} \right) -\delta_1 \right] - \frac{\alpha p^{\theta}}{U_{\pm}}\right) \\
&\label{line1} \ll  \delta \pi_C(x_j,L)^{1/2} \left(1+\frac{x^{(1-\theta)/2}}{(\alpha\omega)^{1/2}} \right)  +  \frac{\delta}{\alpha x^{\theta}} \ \pi_C(x_j,L) \\ 
& \label{line2} \quad + \frac{|C|}{|G|}\log x \left( (\delta n_L/\omega)^{1/2}\alpha^{1/4}x^{1-\Delta/2 +\theta/4} + \frac{\delta n_L}{\alpha^{1/2}}x^{1-\theta/2}\log x\right)\\
    & \nonumber\label{line3} \quad + (\delta n_L/\omega)^{1/2}\alpha^{1/4}x^{\Delta/2 +\theta/4}\log x.
\end{align}
Note that the error terms in \eqref{line1} can be absorbed into \eqref{line2}. Invoking \eqref{Suff-cond}, this completes the proof of Theorem \ref{Balog+CDT}.
\end{proof}
\vspace{0.3cm}
\subsection{Proof of Theorem \ref{Gen-Landau+CDT}}
Since the proof is similar to that of Theorem \ref{Balog+CDT}, we point out only the main differences below and omit the details.
\begin{proof}
 We proceed as in the proof of Theorem \ref{Balog+CDT} with
\[\delta_1=0, \; \quad \delta_2 = p^{-\lambda}.\]
Note that in this case $\delta=p^{-\lambda} \le x_j^{-\lambda} $ and hence we can eliminate $\delta$ from the error terms. We follow the proof above until \eqref{T_1value?} and choose
\[T_1= \alpha \omega x_j^{\theta+\lambda} \log x .\] 
Following the reasoning after \eqref{T_1value?} in the proof of Theorem \ref{Balog+CDT} with the above value of $T_1$, we obtain the asymptotics when $x_j<p \le x_{j+1}$. Lastly, summing over $j$ gives the desired result claimed in Theorem \ref{Gen-Landau+CDT}. 
\end{proof}
\begin{remark}
   One can also write versions of Theorems \ref{Balog+CDT} and \ref{Gen-Landau+CDT} for a normal extension $L/K$ with Galois group $G=\Gal(L/K)$, where $K/\Q$ is a finite extension and $C$ is a fixed conjugacy class of $G$. Here one would obtain joint distribution results for the primes $\frakp\in K$ unramified in $L$ with the Artin symbol $[\frac{L/K}{\frakp}]=C$ and norm $N_{K/\Q} \frakp \le x $ with conditions on the fractional part $\{\alpha (N_{K/\Q}\frakp)^\theta\}$. For the sake of clarity of exposition, and applications, the proofs are presented with $K=\Q$.
\end{remark}
\vspace{0.3cm}
\section{Applications: extremal primes for non-CM elliptic curves
} \label{CDTsection}

\vspace{0.2cm}
In this section, we provide proofs of Theorems \ref{main} and \ref{a=0Cor}.

 Let $N_E$ denote the conductor of the elliptic curve $E$ without complex multiplication (CM) over $\Q$. Recall that extremal primes for a fixed elliptic curve $E$ are those for which $[a_p(E)]= \pm2\sqrt{p}$. These were first studied by James {\it et al.} \cite{James2016} (see also \cite{JP2017}) who conjectured that, as $x\to \infty$, 
\begin{eqnarray}
	&& \hspace{-1cm} \# \left\{ p \leq x :p\nmid N_E,  a_p(E) = \pm [ 2 \sqrt{p} ] \right\} 
	\sim \begin{cases} \displaystyle \frac{2}{3\pi} \; \frac{x^{3/4}}{\log{x}} & \mbox{for $E$ with CM;} \vspace{2pt}\\   \label{conj-EP-CM}
		\displaystyle \frac{8}{3\pi} \; \frac{x^{1/4}}{\log{x}} & \mbox{for $E$ without CM.}  \end{cases}
\end{eqnarray}
The asymptotics for CM curves were proved in \cite{JP2017} while for the non-CM case, \cite{GJ} confirmed the asymptotics on average. In contrast, the asymptotics \eqref{conj-EP-CM} for a fixed non-CM curve seem to be out of reach with current techniques. 
In \cite{Win4}, along with David, Gafni, and T.-Butterbaugh, the authors established the following upper bounds for a single curve $E/\Q$, under GRH\footnote{The result in \cite{Win4} assumes additional hypotheses which are now known to be true due to the recent breakthrough of Newton-Thorne \cite{newton2020symmetric}.} for $L(s, \mbox{Sym}^n(E)),\ n\geq 0$. 
    \begin{equation*}
		\# \{  x < p \leq  2x : p\nmid N_E,\; a_p(E) = [2\sqrt{p}] \} \ll_E {x^{1/2}}. 
	\end{equation*}
From recent work of Gafni, Thorner and Wong
\cite{gafni2020applications}, it follows that an unconditional upper bound of the order of $x(\log \log x)^2/(\log x)^2$ holds. As an application of Theorem \ref{Balog+CDT}, we study the asymptotics of the following related quantity
\begin{equation}\label{ext_primes_mod_l}
\# \left\{ x <p \leq 2x: p\nmid N_E,\;
	 a_p(E) \equiv [2 \sqrt{p}]\bmod \ell \right\}.
\end{equation}
 We begin by giving some background needed for the proof of Theorem \ref{main}. For a given prime $\ell$, let $E[\ell]$ denote the $\ell$-torsion points subgroup of $E[\bar{\Q}]$. It is known that the Galois representation 
\begin{equation*}
    \rho_{E,\ell}: \mbox{Gal}(\bar{\Q}/\Q) \to \mbox{Aut}_{\F_\ell}(E[\ell]) \cong \mbox{GL}_2(\F_\ell)
\end{equation*} 
is unramified at all primes $p \nmid N_E\ell$. The field $\Q(E[\ell])$, obtained by adjoining the coordinates of all the $\ell$-torsion points of $E$ to $\Q$, is the fixed field in $\bar{\Q}$ of ker$\rho_{E,\ell}$. Serre \cite{Serre-Inventiones} showed that for all but finitely many primes $\ell$, the representation $\rho_{E,\ell}$ is surjective. Thus, using $\rho_{E,\ell}$ we see that for all but finitely many primes $\ell$, the Galois group $G_\ell := \mbox{Gal}(\Q(E[\ell])/\Q)$ is equal to $\mbox{GL}_2(\F_\ell)$. Henceforth when we write $\ell$, we assume that it is a prime large enough so that the surjectivity of $\rho_{E,\ell}$ holds. The characteristic polynomial of $\rho_{E,\ell}(\sigma_p)$ is given by 
$$x^2 -a_p(E) x + p \, (\bmod \ell).$$ 
That is, $a_p(E)$ is the trace of the Frobenius automorphism at $p$. Therefore, for $a \in \F_\ell$, if $C_\ell(a)$ denotes the union of conjugacy classes in $\mbox{GL}_2(\F_\ell)$ of elements of trace $a$ modulo $ \ell$, then 
$$a_p(E) \equiv a \bmod \ell \iff \sigma_p\in C_\ell(a).$$
The structure of conjugacy classes in $\mbox{GL}_2(\F_\ell)$ for an odd prime $\ell$ is well known, see for example \cite[Section 5.2]{Fulton-Harris}. It follows that

\begin{eqnarray} \label{C/G size}
    \frac{|C_\ell(a)|}{|G_\ell|} &=& 
	\begin{cases} \displaystyle  \frac{\ell^2-\ell-1}{(\ell-1)^2(\ell+1)} & \mbox{for $a \neq 0$} \vspace{5pt}\\    
	\displaystyle  \frac{\ell}{(\ell-1)(\ell+1)} & \mbox{for $a = 0$}
	 \end{cases} \\ \nonumber  &=& \frac{1}{\ell} + \bigO \left( \frac{1}{\ell^2} \right).
\end{eqnarray}
Moreover, for $a\neq 0$, $C_\ell(a)$ is a union of $\ell$ conjugacy classes, while $C_\ell(0)$ is a union of $\ell-1$ conjugacy classes. We use the following effective Chebotarev density theorem of Lagarias-Odlyzko \cite{L-O} which is sufficient for our purposes.
\begin{thm*}[\cite{L-O}]
    Let $L/K$ be a finite Galois extension of number fields with Galois group $G$ and $C$ be a conjugacy class in $G$. There exists an effectively computable positive absolute constant $c_1$ such that if $\zeta_L(s)$ satisfies the GRH, then for every $x\geq 2$
    \begin{equation}\label{LO-CDT}
    \pi_C(x,L/K)-\frac{|C|}{|G|}\pi(x) \leq c_1
    \frac{|C|}{|G|}x^{1/2}\log d_Lx^{n_L} + \log d_L.
    \end{equation}
\end{thm*}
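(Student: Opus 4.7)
The plan is to follow the classical analytic approach through Artin $L$-functions, using character orthogonality, the explicit formula, and GRH to control the zero contribution. First I would work with the Chebyshev-weighted counting function
$$\psi_C(x, L/K) := \sum_{\substack{\mathfrak{p},\, m \ge 1,\; \mathfrak{p}\text{ unramified in }L \\ N_{K/\Q}\mathfrak{p}^m \le x,\; \sigma_\mathfrak{p}^m \in C}} \log N_{K/\Q}\mathfrak{p},$$
since conversion between $\psi_C$ and $\pi_C$ is routine (Abel summation together with the trivial $O(x^{1/2})$ bound on proper prime-power contributions). Fixing a representative $g_C \in C$, orthogonality of the irreducible characters of $G$ gives $\mathbf{1}_C(\sigma) = \frac{|C|}{|G|}\sum_{\chi} \overline{\chi(g_C)}\, \chi(\sigma)$, which, applied to Frobenius elements, produces the decomposition
$$\psi_C(x, L/K) \;=\; \frac{|C|}{|G|}\sum_{\chi} \overline{\chi(g_C)}\, \psi(x, \chi, L/K),$$
where $\psi(x, \chi, L/K)$ is the Chebyshev function attached to the Artin $L$-function $L(s, \chi, L/K)$.

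Second, for each irreducible $\chi$ of $G$, Brauer's induction theorem ensures that $L(s, \chi, L/K)$ is meromorphic on $\mathbb{C}$, and the factorization $\zeta_L(s) = \prod_{\chi} L(s, \chi, L/K)^{\chi(1)}$ shows that every nontrivial zero of $L(s, \chi, L/K)$ is a zero of $\zeta_L(s)$, hence lies on $\Re(s) = 1/2$ by the assumed GRH. I would then apply the truncated explicit formula
$$\psi(x, \chi, L/K) \;=\; \delta_\chi \, x \;-\; \sum_{|\gamma| \le T} \frac{x^\rho}{\rho} \;+\; R(x, T, \chi),$$
where $\delta_\chi = 1$ for the trivial character and $0$ otherwise, $\mathfrak{A}(\chi) := d_K^{\chi(1)} N_{K/\Q}\mathfrak{f}(\chi)$ is the Artin conductor, and $R(x,T,\chi)$ is the standard remainder controlled by the degree and conductor data of $L(s,\chi,L/K)$. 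Under GRH, $|x^\rho/\rho| \le x^{1/2}/|\rho|$, and the Hadamard-product zero-density bound $N(T+1, \chi) - N(T, \chi) \ll \log(\mathfrak{A}(\chi) T^{\chi(1) n_K})$ permits, after optimizing $T$, the estimate
$$\psi(x, \chi, L/K) - \delta_\chi x \;\ll\; x^{1/2}\,\log\!\big(\mathfrak{A}(\chi)\, x^{\chi(1) n_K}\big).$$

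Finally, inserting this into the character decomposition, the trivial character yields the main term $\frac{|C|}{|G|}\,x$, while the nontrivial characters contribute to the error, bounded via $|\chi(g_C)| \le \chi(1)$. The conductor--discriminant identity
$$\sum_{\chi} \chi(1)\,\log \mathfrak{A}(\chi) \;=\; \log d_L,$$
together with $\sum_\chi \chi(1)^2 = |G|$ (so that $n_K \sum_\chi \chi(1)^2 = n_L$), telescopes the $\chi$-weighted sum into the single quantity $\log d_L + n_L \log x = \log(d_L x^{n_L})$, giving
$$\psi_C(x, L/K) - \frac{|C|}{|G|}\,\psi(x) \;\ll\; \frac{|C|}{|G|}\,x^{1/2}\log(d_L x^{n_L}).$$
A standard Abel summation converts this to the stated $\pi$-version, with the additive $\log d_L$ error absorbing the contribution of ramified rational primes (of which there are $O(\log d_L)$). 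The main obstacle is the careful uniform zero-sum estimate for each Artin $L$-function with explicit dependence on $\chi(1)$, $n_K$, and $\mathfrak{A}(\chi)$, since only with such precise control can the conductor--discriminant identity collapse the sum over $\chi$ into the clean form $\log(d_L x^{n_L})$ appearing in the theorem.
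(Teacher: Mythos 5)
Your overall architecture (reduce to a Chebyshev-type count, expand the indicator of $C$ in characters, apply an explicit formula under GRH, then collapse the conductor data via the conductor--discriminant formula) is the right general shape, but there is a genuine gap at the central analytic step: you run the explicit formula and the Hadamard/zero-density estimates directly for the Artin $L$-functions $L(s,\chi,L/K)$ attached to the irreducible characters of the full group $G$. For non-abelian irreducible $\chi$, Brauer's theorem gives only meromorphy, and under the stated hypothesis (GRH for $\zeta_L$ alone) you cannot rule out poles of $L(s,\chi,L/K)$ inside the critical strip. This breaks the argument in two places: the deduction ``every nontrivial zero of $L(s,\chi)$ is a zero of $\zeta_L$'' from the factorization $\zeta_L=\prod_\chi L(s,\chi)^{\chi(1)}$ is not valid as stated, because a zero of one factor could be cancelled by a pole of another; and, more seriously, the truncated explicit formula with remainder $R(x,T,\chi)$ and the bound $N(T+1,\chi)-N(T,\chi)\ll\log\bigl(\mathfrak{A}(\chi)T^{\chi(1)n_K}\bigr)$ are standard only for functions known to be entire, i.e.\ they presuppose Artin's holomorphy conjecture. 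So what you have written is essentially a proof under GRH $+$ Artin's conjecture (the Murty--Murty--Saradha setting), which is a strictly stronger hypothesis than the one in the statement.

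The actual Lagarias--Odlyzko proof --- and this is also exactly the device the paper reuses in Section 4 when estimating $F(-\theta/2-i\theta\tau)$ --- avoids this by Deuring's reduction to the cyclic case: fix $g\in C$, let $H=\langle g\rangle$ with fixed field $E=L^{H}$, and write the weighted count as $\frac{|C|}{|G|}\sum_{\chi\in\widehat{H}}\overline{\chi}(g)\,\psi(x,\chi,L/E)$, where now every $\chi$ is a one-dimensional character of the abelian extension $L/E$, so each $L(s,\chi,L/E)$ is a Hecke $L$-function, entire for $\chi\neq 1$, and divides $\zeta_L(s)$; GRH for $\zeta_L$ then controls all relevant zeros with no unproved holomorphy input, and the conductor--discriminant formula for $L/E$ produces the $\log d_L+n_L\log x$ bookkeeping (the extra additive $\log d_L$ absorbing the ramified primes). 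If you want to salvage your write-up as a proof of the quoted theorem, you should either insert this cyclic reduction in place of the decomposition over irreducible characters of $G$, or explicitly add Artin's conjecture to the hypotheses; as it stands the middle step does not follow from GRH for $\zeta_L$.
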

\vspace{0.3cm}
\subsection{Proof of Theorem \ref{main}}
\begin{proof}
For each residue $a \in \left\{ 0, 1, \ldots, \ell-1 \right\}$, we have  
$$[2 \sqrt{p}] \equiv a \bmod \ell \iff \left\{\frac{2 \sqrt{p}}{\ell}\right\} \in \left[ \frac{a}{\ell}, \frac{a+1}{\ell} \right)$$
and
$$ a_p(E) \equiv a \bmod \ell \iff \sigma_p \in C_\ell(a) \subseteq G_\ell.$$ 
Therefore, by Theorem \ref{Balog+CDT} with $\theta=1/2,  \alpha =2/\ell,$ and $\delta = 1/\ell$, we have
\begin{align*}
\# &\left\{ p \leq x \;:\; a_p(E) \equiv [2 \sqrt{p}]\equiv a \bmod \ell \right\} \\
&=  \# \left\{ p \leq x : \sigma_p \in C_\ell(a)\;\text{and}\; \left\{\frac{2 \sqrt{p} }{\ell} \right\} \in \left[ \frac{a}{\ell}, \frac{a+1}{\ell} \right) \right\} \\
&=\frac{1}{\ell} \ \pi_{C_\ell(a)}\left(x, \Q(E[\ell])\right)  + \bigO\left(\omega^{1/2} \ell^{1/4} x^{9/8-\Delta/2}\log x +\omega \ell^{5/2}x^{3/4}\log^{2} x + \frac{x}{\omega \ell^2 \log x}\right).
\end{align*}
This gives
\begin{align*}
\# &\left\{ p \leq x: a_p(E) \equiv [2 \sqrt{p}] \bmod \ell \right\} \\
&= \sum_{a \bmod \ell}\# \left\{ p \leq x: a_p(E) \equiv [2 \sqrt{p}] \equiv a \bmod \ell \right\} \\
&= \sum_{a \bmod \ell}  \frac{1}{\ell}\ \pi_{C_\ell(a)}\left(x, \Q(E[\ell])\right) 
 + \bigO\left(\omega^{1/2} \ell^{5/4} x^{9/8-\Delta/2}\log x +\omega \ell^{7/2}x^{3/4}\log^{2} x + \frac{x}{\omega \ell \log x}\right)\\
&= \frac{\pi(x)}{\ell} +\bigO\left(\omega^{1/2} \ell^{5/4} x^{9/8-\Delta/2}\log x + x^{3/4}\ell^{7/2}\log^{5/2} x + \frac{ x}{\omega \ell \log x} 
\right),
\end{align*}
where we have used \eqref{C/G size} and \eqref{LO-CDT} to compute $\pi_{C_\ell(a)}\left(x, \Q(E[\ell])\right)$.
\end{proof}
\vspace{0.3cm}
\subsection{Proof of Theorem \ref{a=0Cor}} 
As in the proof of Theorem \ref{main}, we note that
\begin{align*}
\# &\left\{ p \leq x: a_p(E) \equiv [2 \sqrt{p}] \equiv 0 \bmod \ell \right\}\\
&\quad =\# \left\{ x< p \leq 2x \;:\; \sigma_p \in C_0\;\text{and}\; \left\{\frac{2 \sqrt{p} }{\ell}\right\} \in \left[ 0, \frac{1}{\ell} \right) \right\},
\end{align*}
where $C_0$ denotes the union of conjugacy classes of trace zero in $\mbox{Gal}(L/\Q) =\mbox{GL}_2(\F_\ell)$. 

Before giving the proof, we first fix some notations and record some useful results needed for the proof. For a group $G$ and $C\subset G$, let $\delta_C: G\to \{0,1\}$ denote the class function such that $\delta_C(g)=1$ if and only if $g\in C$. Then,
\[\pi_C(x,L) = \sum_{\substack{ p \text { prime }\\ p \text{ unramified in } L\\{x< p\leq 2x} }} \delta_C(\sigma_p).\] Let
\[\Phi_{C, [\delta_1,\delta_2)}(x,L,\alpha) := 
\sum_{
\substack{
p \text { prime }\\ 
p \text{ unramified in } L\\
{x< p\leq 2x}\\ 
\delta_1\le\{\alpha p^{\theta}\}< \delta_2
}
}
\delta_C(\sigma_p).\]
We now define an analogue of these functions that include contributions from ramified primes as well. Let $D_{\frakp}$ and $I_{\frakp}$ denote the decomposition and inertia subgroups of $G$, respectively at a chosen prime ideal $\frakp$ lying above $p$. Consider $\mbox{Frob}_{\frakp} \in D_{\frakp}/I_{\frakp}$, the Frobenius element at $\frakp$. Then, for each integer $m\geq 1$, we define $$\delta_C(\sigma_p^m) := \frac{1}{|I_{\frakp}|}\sum_{\substack{{g\in D_{\frakp}}\\{ gI_{\frakp}= \mbox{Frob}_{\frakp}^m \in D_{\frakp}/I_{\frakp}}   }} \delta_C(g).$$ Note that $\delta_C(\sigma_p^m)$ is independent of the choice of $\frakp$ and the above definition agrees with the usual definition of $\delta_C(\sigma_p^m)$ for primes $p$ that are unramified in $L$.
Define 
\[\tilde{\pi}_C(x,L) := \sum_{\substack{{p \text{ prime}, \, m\geq 1}\\ {x< p^m\leq 2x}}} \frac{\delta_C(\sigma_p^m)}{m} 
\quad \text{and} \quad \tilde{\Phi}_{C,[\delta_1,\delta_2)}(x,L,\alpha) := \sum_{\substack{{p \text{ prime}, \, m\geq 1}\\ {x< p^m\leq 2x}\\
\delta_1\le\{\alpha p^{m\theta}\}< \delta_2}} \frac{\delta_C(\sigma_p^m)}{m}. \]
With these notations, we state two lemmas from \cite{Zywina} to be used later, and we state them for our case when the base field is $\Q$. 
\begin{lemma}\cite[Lemma 2.7]{Zywina} For any subset $C$ of $G$ stable under conjugation,
    \begin{equation}\label{pi-tilde to pi}
    \tilde{\pi}_C(x, L) = \pi_C(x,L) + \bigO\left( \frac{x^{1/2}}{\log x} + \log d_L\right).
\end{equation}
\end{lemma}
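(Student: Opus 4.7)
The plan is to decompose the difference $\tilde\pi_C(x,L) - \pi_C(x,L)$ according to whether the exponent in the defining sum is $m=1$ or $m\ge 2$, and in the $m=1$ case further according to whether the prime is ramified or unramified in $L$. Since by definition $\pi_C(x,L)$ sums $\delta_C(\sigma_p)$ only over unramified primes $x<p\le 2x$, we obtain the identity
\begin{equation*}
\tilde\pi_C(x,L) - \pi_C(x,L) \;=\; \sum_{\substack{x<p\le 2x \\ p\text{ ramified in }L}} \delta_C(\sigma_p) \;+\; \sum_{m\ge 2} \frac{1}{m}\sum_{x<p^m\le 2x}\delta_C(\sigma_p^m),
\end{equation*}
and it remains to estimate each of the two sums. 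The extended definition of $\delta_C(\sigma_p^m)$ (as an average over the coset $\mathrm{Frob}_\frakp^m I_\frakp$) keeps the values in $[0,1]$, so both sums are bounded termwise by $1$.

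For the ramified contribution, I would use the standard fact that a prime $p$ ramifies in $L/\Q$ if and only if $p\mid d_L$. Consequently, the number of ramified rational primes is at most $\omega(d_L)$, the number of distinct prime divisors of the discriminant, and thus $\ll \log d_L$. This immediately gives the first sum the bound $O(\log d_L)$, accounting for the second term of the claimed error.

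For the prime‑power contribution, I would bound the inner sum trivially by $\pi\big((2x)^{1/m}\big)$ and sum in $m$. The $m=2$ term dominates and gives $\tfrac{1}{2}\pi\big((2x)^{1/2}\big)\ll x^{1/2}/\log x$ by the Prime Number Theorem; the remaining terms with $m\ge 3$ contribute $\ll\sum_{3\le m\le \log_2(2x)} \frac{1}{m}\pi\big((2x)^{1/m}\big)\ll x^{1/3}\log x$, which is absorbed. Combining the two estimates yields the stated error term $O(x^{1/2}/\log x + \log d_L)$. There is no serious obstacle here: the argument is a purely bookkeeping reduction of the explicit‑formula‑style sum $\tilde\pi_C$ to $\pi_C$, and the only nontrivial input is the elementary estimate on the number of ramified primes via the discriminant, together with the Chebyshev bound for counting prime powers.
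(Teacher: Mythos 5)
Your proof is correct. The paper does not prove this lemma at all --- it simply cites \cite[Lemma 2.7]{Zywina} --- and your argument is exactly the standard one behind that citation: the $m=1$ unramified terms of $\tilde{\pi}_C$ recover $\pi_C$, the ramified primes divide $d_L$ and hence number $\ll \log d_L$ with each extended value $\delta_C(\sigma_p)\in[0,1]$, and the $m\ge 2$ prime-power terms are $\ll \pi\bigl((2x)^{1/2}\bigr)+x^{1/3}\log x \ll x^{1/2}/\log x$ by Chebyshev.
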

Note that the term $\log d_L$ above makes the error term a little cruder than what appears in \cite{Zywina}, but sufficient for our purposes. The following result follows from Proposition 8 of \cite{Serre-Inventiones}.
\begin{lemma}\cite[Lemma 2.6 (ii)]{Zywina}\label{Zywina-2.6}
    Let $N$ be a normal subgroup of $G$ and let $C$ be a subset of $G$ stable under
conjugation that satisfies $NC\subseteq C$. Then
$$\tilde{\pi}_C(x, L) = \tilde{\pi}_{C'}(x, L^N),$$
where $C'$ is the image of $C$ in $G/N = \mbox{Gal}(L^N/\Q)$.
\end{lemma}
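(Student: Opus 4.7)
The plan is to establish the identity term-by-term in the summations defining $\tilde{\pi}_C(x,L)$ and $\tilde{\pi}_{C'}(x,L^N)$. Since both sides sum over the same index set $\{(p,m):x<p^m\le 2x\}$ with the same weight $1/m$, it suffices to show that for each such pair, the weight $\delta_C(\sigma_p^m)$ computed inside $L/\Q$ equals $\delta_{C'}(\sigma_p^m)$ computed inside $L^N/\Q$.

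The structural ingredient is that under the hypothesis $NC\subseteq C$, combined with the normality of $N$ and the conjugation-invariance of $C$, the set $C$ is a union of cosets of $N$. Indeed, for $c\in C$ and $n\in N$, the element $nc$ lies in $NC\subseteq C$, while $cn=(cnc^{-1})c\in NC\subseteq C$ because $cnc^{-1}\in N$ by normality. Hence $CN=NC=C$, and the indicator function $\delta_C:G\to\{0,1\}$ factors through the quotient map $\varphi:G\to G/N$ as $\delta_C=\delta_{C'}\circ\varphi$. This is really the only content of the lemma; everything else is functorial bookkeeping.

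For primes $p$ unramified in $L$, the Frobenius class in $\mathrm{Gal}(L^N/\Q)$ is exactly $\varphi(\sigma_p)$, and the equality $\delta_C(\sigma_p^m)=\delta_{C'}(\sigma_p^m)$ is then immediate from the factorization above. The main technical step is the ramified case. Setting $\frakp'=\frakp\cap L^N$, I would invoke the standard relations $D_{\frakp'}=\varphi(D_\frakp)$, $I_{\frakp'}=\varphi(I_\frakp)$, and $\mathrm{Frob}_{\frakp'}=\varphi(\mathrm{Frob}_\frakp)$. A direct application of the second isomorphism theorem then shows that $\varphi$ restricted to the coset $\mathrm{Frob}_\frakp^m\,I_\frakp$ surjects onto $\mathrm{Frob}_{\frakp'}^m\,I_{\frakp'}$ with uniform fiber size $|I_\frakp\cap N|=|I_\frakp|/|I_{\frakp'}|$. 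Combining this fiber count with $\delta_C=\delta_{C'}\circ\varphi$ yields
\[
\frac{1}{|I_\frakp|}\sum_{\substack{g\in D_\frakp\\ gI_\frakp=\mathrm{Frob}_\frakp^m}}\delta_C(g)\;=\;\frac{|I_\frakp\cap N|}{|I_\frakp|}\sum_{\substack{\bar g\in D_{\frakp'}\\ \bar g I_{\frakp'}=\mathrm{Frob}_{\frakp'}^m}}\delta_{C'}(\bar g)\;=\;\frac{1}{|I_{\frakp'}|}\sum_{\substack{\bar g\in D_{\frakp'}\\ \bar g I_{\frakp'}=\mathrm{Frob}_{\frakp'}^m}}\delta_{C'}(\bar g),
\]
which is the required equality of weights. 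Summing $1/m$ times this identity over all $(p,m)$ with $x<p^m\le 2x$ gives the lemma. The main obstacle, modest though it is, lies in checking that the fibers of $\varphi$ over the Frobenius coset are uniform of the claimed size; this is purely a group-theoretic computation, with no analytic input required.
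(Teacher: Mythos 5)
Your proposal is correct. Note that the paper itself offers no proof of this lemma: it simply cites Zywina (Lemma 2.6(ii)), who in turn deduces it from Proposition 8 of Serre's Chebotarev paper; what you have written out is essentially the direct computation underlying those references. The two key points are exactly the right ones: (i) $NC\subseteq C$ together with normality of $N$ gives $NC=CN=C$, so $\delta_C=\delta_{C'}\circ\varphi$ factors through $G/N$; and (ii) for $\frakp'=\frakp\cap L^{N}$ one has $\varphi(D_\frakp)=D_{\frakp'}$, $\varphi(I_\frakp)=I_{\frakp'}$, $\varphi(\mathrm{Frob}_\frakp)=\mathrm{Frob}_{\frakp'}$, and the restriction of $\varphi$ to the coset $\mathrm{Frob}_\frakp^m I_\frakp$ has uniform fibers of size $|I_\frakp\cap N|=|I_\frakp|/|I_{\frakp'}|$, which makes the two weights $\delta_C(\sigma_p^m)$ and $\delta_{C'}(\sigma_p^m)$ agree term by term (independence of the choice of $\frakp$ is already noted in the paper). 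So your argument is a sound, self-contained substitute for the citation rather than a genuinely different route.
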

We are now ready to prove Theorem \ref{a=0Cor}.
\begin{proof}
Consider the extension $L/\Q$ with $L= \Q(E[\ell])$. Observe that $C_0$ is stable under multiplication by $H_\ell$,  the subgroup of scalar matrices in $ \mbox{GL}_2(\F_\ell)$. Moreover, it is the inverse image of $C_0'$, the subset of order two elements in $G_\ell' := G_\ell /H_\ell = \mbox{PGL}_2(\F_\ell)$. Applying Lemma \ref{Zywina-2.6}, we obtain $$\tilde{\pi}_{C_0}(x,  L) = \tilde{\pi}_{C_0'}(x, L^{H_\ell}).$$
Therefore, invoking \eqref{pi-tilde to pi}, we have
\begin{align*}
   & \# \left\{ x< p \leq 2x: \sigma_p \in C_0,\; \left\{\frac{2 \sqrt{p} }{\ell}\right\} \in \left[ 0, \frac{1}{\ell} \right) \right\}\\
   &= \Phi_{C_0, [0,\frac{1}{\ell})}\left(x,L, \frac{2}{\ell} \right) \\
   &= \Phi_{C_0',[0,\frac{1}{\ell})}\left(x,L^{H_{\ell}}, \frac{2}{\ell}\right) + \bigO\left(\frac{x^{1/2}}{\log x} + \log d_L\right).
\end{align*} 
Applying Theorem \ref{Balog+CDT} with $\theta=1/2,\ \alpha=\delta=1/\ell$ and the Chebotarev Density Theorem in the form given in \eqref{LO-CDT} to the sub-extension $L^{H_\ell}/\Q$, we conclude
\begin{align*}
     &  \Phi_{C_0',[0,\frac{1}{\ell})}\left(x,L^{H_{\ell}}, \frac{2}{\ell}\right) + \bigO\left(\frac{x^{1/2}}{\log x} + \log d_L\right)\\
    & =  \frac{\pi_{C_0'}(x, L^{H_\ell})}{\ell} +  \bigO\left(\frac{x}{\ell^2 \omega \log x}+ \omega^{1/2}\frac{x^{9/8-\Delta/2}}{\ell^{1/4}}\log x+\omega \ell^{3/2} x^{3/4}(\log x)^{2}\right)
    \\
    & =\frac{\pi(x)}{\ell^2} + \bigO\left(\frac{x}{\ell^2 \omega \log x}+ \omega^{1/2}\frac{x^{9/8-\Delta/2}}{\ell^{1/4}}\log x+\omega \ell^{3/2} x^{3/4}(\log x)^{2}\right),
\end{align*}
for $\ell \ll x^{2\Delta/7-1/14}  \omega^{-2/7} \log ^{-8/7}x$, noting that $\log d_L \ll [L^{H_\ell} :\Q]=|\mbox{PGL}_2(\F_\ell)| < \ell^3 $ and $|C_0'/G_\ell'| = 1/\ell +\bigO(1/\ell^2)$. This completes the proof of Theorem \ref{a=0Cor}. 
\end{proof}

\section{Proofs of intermediate results }\label{Proof of F estimate}
\vspace{0.2cm}
In this section, we give proofs of two auxiliary results used in Section \ref{section-Balog}.
\subsection{Mean value estimation of $L(1/2 +it)$}
\begin{lemma}\label{L-estimate by MV}    
\[\int_{T'}^{2 T'} |L(1/2+it)|^2 ~dt \ll  \alpha T' + {\alpha}^2 x^{\theta} \log(\alpha x^{\theta}) .\]
\end{lemma}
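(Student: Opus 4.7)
The plan is to expand $|L(1/2+it)|^2$ as a double sum and integrate term-by-term, separating diagonal and off-diagonal contributions. Writing $n_j = m_j + \delta_1 \asymp \alpha x^\theta$ for $m_j$ in the summation range, we have
\[|L(1/2+it)|^2 \;=\; \alpha \sum_{m_1,m_2} \frac{1}{\sqrt{n_1 n_2}} \left(\frac{n_2}{n_1}\right)^{it},\]
so
\[\int_{T'}^{2T'} |L(1/2+it)|^2\, dt \;=\; \alpha \sum_{m_1,m_2} \frac{1}{\sqrt{n_1 n_2}} \int_{T'}^{2T'} \left(\frac{n_2}{n_1}\right)^{it} dt.\]

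The diagonal contribution $m_1 = m_2$ equals $\alpha T' \sum_m 1/n_m$, and since $n_m$ ranges dyadically over $(\alpha x^\theta/3, 3\alpha x^\theta]$, this sum is $O(1)$, giving a contribution of $O(\alpha T')$, which matches the first term of the claimed bound.

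For the off-diagonal contribution, the $t$-integral is bounded by $2/|\log(n_2/n_1)|$. Since $n_1,n_2 \asymp \alpha x^\theta$, a first-order Taylor expansion yields $|\log(n_2/n_1)| \asymp |m_1 - m_2|/(\alpha x^\theta)$, while the prefactor satisfies $1/\sqrt{n_1 n_2} \asymp 1/(\alpha x^\theta)$. Hence each off-diagonal pair contributes $\ll 1/|m_1-m_2|$ (after pulling out the $\alpha$), and summing over $m_1 \ne m_2$ in a range of length $O(\alpha x^\theta)$ gives
\[\alpha \sum_{m_1 \ne m_2} \frac{1}{|m_1-m_2|} \;\ll\; \alpha \cdot \alpha x^\theta \log(\alpha x^\theta) \;=\; \alpha^2 x^\theta \log(\alpha x^\theta),\]
which is exactly the second term of the stated bound.

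The main subtlety is that the ``frequencies'' $\log(m + \delta_1)$ are shifted by the fractional $\delta_1$, so one cannot directly cite the textbook Montgomery--Vaughan mean value theorem for integer-indexed Dirichlet series; however, what actually drives both estimates is only the spacing $\log n_{m+1} - \log n_m \asymp 1/n_m$, which is insensitive to the shift. A cleaner alternative would be to invoke a Hilbert-type inequality for arbitrary real exponents with minimum spacing $\delta \asymp 1/(\alpha x^\theta)$, which produces the same bound $\alpha(T' + 1/\delta) = \alpha T' + \alpha^2 x^\theta$ (the $\log$ then arises only if one uses the crude uniform-spacing version rather than Montgomery--Vaughan's sharper per-frequency form). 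Either route is essentially routine; no new ideas beyond the standard diagonal/off-diagonal dichotomy are required.
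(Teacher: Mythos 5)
Your argument is correct and is essentially the paper's own proof: expand $|L(1/2+it)|^2$ into a double sum, note the diagonal contributes $\alpha T'\sum_m (m+\delta_1)^{-1}\ll \alpha T'$, and bound each off-diagonal pair by $\alpha/(\sqrt{n_1n_2}\,|\log(n_2/n_1)|)\ll \alpha/|m_1-m_2|$, which sums to $\alpha^2 x^\theta\log(\alpha x^\theta)$. The only cosmetic difference is that you observe the shift by $\delta_1$ is harmless via the spacing argument (or Montgomery--Vaughan), whereas the paper carries the $\delta_1$ explicitly through the same computation.
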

\begin{proof}
We have
	\begin{align*} 
		&\int_{T'}^{2T'} \left|L(1/2+it)\right|^2 ~dt =\alpha \int_{T'}^{2T'} \left|\sum_{\alpha x^\theta/3-\delta_1< m\leq 3\alpha x^\theta-\delta_1}(m+\delta_1)^{-1/2-it}\right|^2 ~dt \\
		&= \alpha \int_{T'}^{2T'} \sum_{\alpha x^\theta/3-\delta_1< m\leq 3\alpha x^\theta-\delta_1}(m+\delta_1)^{-1}+ \sum_{\alpha    x^\theta/3-\delta_1<k\neq m\leq 3\alpha x^\theta-\delta_1}\frac{(k+\delta_1)^{-1/2+it}}{(m+\delta_1)^{1/2+it}}~dt \\
		&= \sum_{\alpha x^\theta/3-\delta_1< m\leq3\alpha x^\theta-\delta_1}\frac{\alpha T'}{(m+\delta_1)}+ \bigO\left( \sum_{ \alpha x^\theta/3-\delta_1<k< m\leq3\alpha x^\theta-\delta_1}\frac{\alpha((m+\delta_1)(k+\delta_1))^{-1/2}}{\log((m+\delta_1)/(k+\delta_1))} \right).
	\end{align*}
	Using $\delta_1 < \alpha x^\theta/3$ and rewriting the sum in the error term, we obtain
	\begin{eqnarray*}
	&&\sum_{\alpha x^\theta/3-\delta_1< m\leq3\alpha x^\theta-\delta_1}\sum_{r=1}^{(m+\delta_1)/9}\frac{\alpha}{(m+\delta_1)\sqrt{1-r/(m+\delta_1}) \log\left(1-r/(m+\delta_1)\right)}\\
		&& \ll \sum_{\alpha x^\theta/3-\delta_1< m\leq3\alpha x^\theta-\delta_1}\sum_{r=1}^{(m+\delta_1)/9}\frac{	\alpha }{r}
		 \ll 	\alpha^2 x^\theta \log (\alpha x^\theta).
	\end{eqnarray*}
   This gives us
	\begin{eqnarray*}
	\int_{T'}^{2T'} |L(1/2+it)|^2 dt &=&\alpha T'\sum_{\alpha x^\theta/3-\delta_1< m\leq 3\alpha x^\theta-\delta_1}(m+\delta_1)^{-1}+ \bigO\left( \alpha^2 x^\theta \log (\alpha x^\theta) \right) \\
	&\ll& \alpha T' + \alpha^2 x^\theta \log(\alpha x^\theta).
	\end{eqnarray*}
	This completes the proof of the lemma.
	\end{proof}
\vspace{0.3cm}
\subsection{Estimation of $F(- \theta / 2 - i \theta t)$} 
For each sub-interval $[x_j, x_{j+1}] \subset[x,2x]$ we prove the following.
\begin{proposition}\label{F-estimate-prop}
Let $\theta \in (0,1)$ be fixed. Consider a finite extension $K$ of $\Q$ and a normal extension $L$ over $K$ with Galois group Gal$(L/K)=G$. Let $C$ be a union of $b$ conjugacy class in $G$, define 
	 $$F(-\theta/2 -i\theta\tau) = \sum\limits_{\substack{x_j < N(\frakp)\leq x_{j+1} \\ \sigma_\frakp \in C}} (N\frakp)^{\theta/2 +i\theta \tau}.$$
Assume that the Dedekind zeta function $\zeta_L(s)$ has no zeros in the strip $\Re s>1-\Delta$ where $\theta/2 <\Delta \leq 1/2$. 
Then 
\begin{align*}
F(-\theta/2 - i \theta \tau)  &\ll \nonumber \frac{|C|}{|G|} x^{1-\Delta+\theta/2}(\log d_L + n_L\log T') \left(\frac{\log T'}{\log x}+ \frac{x^{\Delta}}{T'}\right) \\
& \quad + x^{\theta/2} \left( \log d_L + b n_K + bn_K\frac{ x\log x}{T'}\right)
\end{align*} 
uniformly for $0<T'\leq \tau\leq 2T'$. 
\end{proposition}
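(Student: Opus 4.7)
The plan is to adapt the Lagarias--Odlyzko strategy \cite{L-O}, combining partial summation with an explicit formula for the Chebotarev prime-counting function under GRH for $\zeta_L$. First I would apply partial summation with $g(t)=t^{\theta/2+i\theta\tau}$ to write
\[
F(-\theta/2-i\theta\tau) = g(x_{j+1})\pi_C(x_{j+1},L/K) - g(x_j)\pi_C(x_j,L/K) - \int_{x_j}^{x_{j+1}} g'(t)\,\pi_C(t,L/K)\,dt,
\]
where $\pi_C(t,L/K) = \#\{\mathfrak{p}\subset\mathcal{O}_K : N\mathfrak{p}\le t,\ \sigma_\mathfrak{p}\in C\}$. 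The oscillation carried by $g(t)$ is what ultimately produces the savings $x^{1/2}/T'$ and $\log T'/\log x$ appearing in the target bound.

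Next I would substitute the explicit formula for $\pi_C$ which, under GRH for $\zeta_L$ and after truncating the zero sum at some height $T_0$ polynomial in $x$, reads
\[
\pi_C(t,L/K) = \frac{|C|}{|G|}\mathrm{Li}(t) - \frac{|C|}{|G|}\sum_{|\gamma|\le T_0}\mathrm{li}(t^\rho) + R(t),
\]
where $\rho=1/2+i\gamma$ ranges over non-trivial zeros of $\zeta_L$ and $R(t)$ gathers the contributions of ramified primes, higher prime powers, trivial zeros, and the tail of the zero sum. The main term $(|C|/|G|)\mathrm{Li}(t)$, plugged into the partial-summation identity, yields
\[
\frac{|C|}{|G|}\int_{x_j}^{x_{j+1}}\frac{t^{\theta/2+i\theta\tau}}{\log t}\,dt,
\]
and integration by parts---using that the oscillation $t^{i\theta\tau}$ has frequency $\theta\tau\ge \theta T'$---bounds this by $\ll \tfrac{|C|}{|G|}\cdot \tfrac{x^{1+\theta/2}}{T'\log x}$, matching the $x^{(1+\theta)/2}\cdot x^{1/2}/T'$ piece of the stated estimate.

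For each non-trivial zero $\rho=1/2+i\gamma$, the corresponding contribution reduces to the oscillatory integral
\[
\int_{x_j}^{x_{j+1}}\frac{t^{\theta/2-1/2+i(\theta\tau+\gamma)}}{\log t}\,dt \ \ll\ \frac{x^{(1+\theta)/2}}{(|\theta\tau+\gamma|+1)\log x},
\]
again after integration by parts on the phase $t^{i(\theta\tau+\gamma)}$. Summing over zeros via a dyadic decomposition in $|\theta\tau+\gamma|$ and applying the standard density estimate $N_L(T+1)-N_L(T)\ll \log d_L+n_L\log(|T|+2)$ for zeros of $\zeta_L$ yields the contribution $\ll \tfrac{|C|}{|G|}\, x^{(1+\theta)/2}(\log d_L+n_L\log T')\tfrac{\log T'}{\log x}$. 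The partial-summation boundary terms together with $R(t)$ then produce the residual $x^{\theta/2}(\log d_L+bn_K+bn_K x\log x/T')$, the factor $b$ arising because the explicit formula is applied separately on each of the $b$ conjugacy classes making up $C$ and then recombined.

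The main obstacle I expect is the careful bookkeeping in the sum over zeros: one has to balance the few but large near-resonance contributions (zeros with $\gamma\approx -\theta\tau$) against the many but off-resonance contributions. Obtaining the factor $\log T'/\log x$ rather than $\log T'$ requires that the integration by parts be carried out inside the integral so as to gain a full power of $\log t\asymp \log x$, and that the dyadic decomposition be arranged so the geometric decay from $(|\theta\tau+\gamma|+1)^{-1}$ exactly balances the linear growth in the zero count. This is precisely the place where the hypothesis $T'\le \tau\le 2T'$ is critical, since it pins down the frequency of the oscillation that drives the cancellation.
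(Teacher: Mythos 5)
Your treatment of the structured pieces (the $\mathrm{Li}$ term and the individual zero terms, estimated through one integration by parts against the phase $t^{i(\theta\tau+\gamma)}$ and a dyadic zero count) is sound, but there is a genuine gap in how the unstructured error $R(t)$ of the explicit formula enters. After your first partial summation, $R$ contributes $g(x_{j+1})R(x_{j+1})-g(x_j)R(x_j)-\int_{x_j}^{x_{j+1}}g'(t)R(t)\,dt$ with $g(t)=t^{\theta/2+i\theta\tau}$, and since $|g'(t)|\asymp \theta\tau\,t^{\theta/2-1}$, the only available bound for the integral is $\max_t|R(t)|\cdot\theta\tau\,x^{\theta/2-1}(x_{j+1}-x_j)$: $R$ is known only through an absolute-value estimate, so no oscillation can be exploited there. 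The proposition's bound carries no dependence on the interval length, so it must hold when $x_{j+1}-x_j\asymp x$, and then every piece of $R$ is amplified by a factor $\asymp T'$ relative to what you claim. Even the irreducible parts of $R$ --- ramified primes ($\ll\log d_L$), prime powers ($\ll n_K x^{1/2}/\log x$), trivial-zero and constant terms --- then produce contributions of size $T'x^{\theta/2}\log d_L$ and $n_K T'x^{(1+\theta)/2}/\log x$, which exceed both groups of terms in the stated bound in the regime where the proposition is actually used ($T'$ between roughly $\alpha x^{\theta}$ and $x^{(1+3\theta)/4}$); for instance with $\theta=1/2$, $T'=x^{5/8}$ the second quantity is $n_Kx^{11/8}/\log x$, worse than the trivial bound for $F$. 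There is also an internal tension in your truncation: killing the tail of the zero sum after this $T'$-amplification forces $T_0$ to be a large power of $x$, but then your dyadic bookkeeping yields $(\log d_L+n_L\log T_0)\log T_0/\log x$ rather than the claimed $(\log d_L+n_L\log T')\log T'/\log x$.

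The paper avoids this by never separating the oscillation from the prime sum. It applies the truncated Perron formula directly to the shifted series $Z(s-\theta/2-i\theta\tau)$ built from Hecke $L$-functions $L(s,\chi,L/E)$ via Deuring's reduction (note, relatedly, that your explicit formula should carry the character weights $\bar{\chi}(g)$ rather than being a clean $\frac{|C|}{|G|}$-multiple of a sum over zeros of $\zeta_L$, though after taking absolute values this is only presentational), and then moves the contour to a box lying entirely below the real axis, which is possible precisely because $\theta\tau\ge\theta T'=2T>T$. Consequently the pole at $s=1$, the real axis and the trivial zeros are never crossed: the only residues come from nontrivial zeros and the kernel pole at $-\theta/2-i\theta\tau$, and the Perron and ramified-prime errors enter un-amplified, at scale $x^{\theta/2}\left(\log d_L+n_K+n_K\frac{x\log x}{T}\right)$. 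The benign weight $1/\log N\frakp$ is removed by partial summation only at the very end, costing a factor $1/\log x$ rather than $T'$. To repair your argument you would need to keep the factor $(N\frakp)^{i\theta\tau}$ inside the sum until after the contour shift, which is essentially the paper's proof.
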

The proof follows along the same lines as in \cite{L-O}. The function $F(-\theta/2-i\theta\tau)$ here is similar to that of $\pi_C(x,L/K)$ in \cite{L-O}, the main difference being a shift in the complex variable $s=\sigma +it$ by $\theta/2 +i\theta\tau$. While the shift in the real part by $ \theta/2$ results in a factor of $x^{\theta/2}$ tagging along with the error terms obtained in \cite{L-O}, the shift in the imaginary part is where the saving is obtained. To be precise, we choose a contour that is a box which avoids the real axis, therefore the only poles in the interior are the non-trivial zeros of $L(s,\chi, L/E)$, and a pole at $-\theta/2-i\theta\tau$. In particular, the residue from the pole at $s=1$ that makes up the main term in the proof by Lagarias-Odlyzko \cite{L-O} does not appear here, giving us a power saving under the assumed zero-free region hypothesis, which is equivalent to GRH when $\Delta=1/2$. 

We now provide details of the proof.
\begin{proof}
We first consider the function 
\begin{equation*}
	\Psi_C(-\theta/2 -i\theta\tau):= \sum\limits_{\substack{N\frakp ^m \in (x_j,x_{j+1}]\\ \frakp  \text{:unramified}\\ \left[\frac{L/K}{\frakp }\right]^m = C}} \frac{\log N\frakp }{N\frakp ^{m(-\theta/2-i\theta\tau)}}
\end{equation*} 
for a single conjugacy class $C$. We use partial summation to pass on to the bounds for $F(-\theta/2 -i\theta\tau)$. As in \cite{L-O}, in order to use Hecke $L$-functions, we need to consider the ramified primes as well, (which are later removed). For $\Re s>1$, let
\begin{equation}\label{Z_C-alt-defn}
	Z(s) := -\frac{|C|}{|G|} \sum_{\chi} \bar{\chi}(g) \frac{L'}{L}(s,\chi, L/E),
\end{equation} where $\chi$ runs over the irreducible characters of $H= Gal(L/E)$ and $E$ is the fixed field of $H$, the cyclic subgroup of $G$ generated by a chosen element $g \in C$. Note that
\begin{equation*}
	Z(s) = \sum_{\frakp , m} \Theta(\frakp ^m)\log(N\frakp ) (N\frakp )^{-ms}
\end{equation*}
where for an unramified prime $\frakp  \subseteq \mathcal{O}_K,$ $$\Theta(\frakp ^m)= \begin{cases}
	1 & \text{if } \left[\frac{L/K}{\frakp }\right]^m = C\\
	0 & \text{otherwise}
\end{cases}
$$ and $|\Theta(\frakp ^m)|\leq 1$ if $\frakp $ ramifies in $L$.
We use Perron's formula to estimate $F(-\theta/2 -i\theta\tau)$ by considering partial sums of $Z(s)$, which include the ramified primes as well. Define 
\begin{equation}\label{I_C(x,t) defn}
	I(x_j,T) := I_C(x_j,T,\theta,\tau)= \frac{1}{2\pi i}\int\limits_{\sigma_0-iT}^{\sigma_0+iT} Z(s-\theta/2-i\theta\tau) \frac{{x_{j+1}}^s-{x_{j}}^s}{s}ds
\end{equation} 
where $T = \theta T'/2$ and $\sigma_0=1+\theta/2+1/\log x$. Then, 
\begin{align}\label{error-perron+ram-primes}
	\nonumber|\Psi_C(-\theta/2 -i\theta\tau) - I(x_j,T)|
	& \leq \left| I(x_j,T) - \sum\limits_{\substack{\frakp ,m\\ x_j< N\frakp \leq x_{j+1}}} \frac{\Theta(\frakp ^m)\log(N\frakp )}{(N\frakp )^{m(-\theta/2-i\theta\tau)}}\right| \\
	& \quad + \left| \sum\limits_{\substack{\frakp ,m\\ x_j<N\frakp \leq x_{j+1}\\ \frakp \text{:ramified}}} \frac{\Theta(\frakp ^m)\log(N\frakp )}{(N\frakp )^{m(-\theta/2-i\theta\tau)}}\right|.
\end{align}
The two terms on the right hand side of the above equation are now estimated. Using Lemma 3.1 of \cite{L-O}, we have, 
\begin{align*}
	&\left| I(x_j,T) - \sum\limits_{\substack{\frakp ,m\\ x_j< N\frakp \leq x_{j+1}}} \frac{\Theta(\frakp ^m)\log(N\frakp )}{(N\frakp )^{m(-\theta/2-i\theta\tau)}}\right| \\ 
	& \hspace{1cm} \leq \sum\limits_{\substack{\frakp ,m \\ N\frakp ^m =x_{j+1} \text{ or } N\frakp ^m= x_j}} \left( \frac{\log N\frakp }{(N\frakp ^m)^{-\theta/2 }}+ \frac{\sigma_0}{T}\right) \\
	& \hspace{1.75cm} + \sum\limits_{\substack{\frakp ,m \\ N\frakp ^m \neq x_{j+1} }} \left( \frac{x_{j+1}}{N\frakp ^m}\right)^{\sigma_0} \min \left(1, T^{-1}\left| \log\frac{x_{j+1}}{N\frakp ^m}\right|^{-1} \right)\frac{\log N\frakp }{(N\frakp ^m)^{-\theta/2}} \\
	& \hspace{1.75cm} + \sum\limits_{\substack{\frakp ,m \\ N\frakp ^m \neq x_j  }} \left( \frac{x_j}{N\frakp ^m}\right)^{\sigma_0} \min \left(1, T^{-1}\left| \log\frac{x_j}{N\frakp ^m}\right|^{-1} \right)\frac{\log N\frakp }{(N\frakp ^m)^{-\theta/2}}.
\end{align*}
Following arguments from \cite{L-O} to estimate the terms on the right side of the above inequality, and noting that $x_j\leq 2x$ for each $j=0,\ldots, B$, 
we get 
\begin{equation}\label{Perron error}
	I(x_j,T) - \sum\limits_{\substack{\frakp ,m\\ x_j< N\frakp \leq x_{j+1}}} \frac{\Theta(\frakp ^m)\log(N\frakp )}{(N\frakp )^{m(-\theta/2-i\theta\tau)}} \ll x^{\theta/2}n_K \log x + n_K\frac{\sigma_0}{T}  + n_K\frac{x^{1+\theta/2}\log^2x}{T}.
\end{equation}
Moreover, \begin{align}\label{ram-primes-error}
	\sum\limits_{\substack{\frakp,m\\ x_j< N\frakp \leq x_{j+1}\\ \frakp \text{:ramified}}} \frac{\Theta(\frakp ^m)\log(N\frakp )}{(N\frakp )^{m(-\theta/2-i\theta\tau)}} \ll x^{\theta/2}\log x \log d_L.
\end{align}
Putting \eqref{Perron error} and \eqref{ram-primes-error} together in \eqref{error-perron+ram-primes}, we see that 
\begin{equation}\label{R_1}
\Psi_C(-\theta/2 -i\theta\tau) = I(x_j,T)+\bigO\left( x^{\theta/2}\log x \left( \log d_L + n_K + \frac{n_K x \log x}{T}\right)\right).
\end{equation}
Next, we estimate $I(x_j,T)$. From \eqref{I_C(x,t) defn} and \eqref{Z_C-alt-defn}, we have 
\begin{equation*}
	I(x_j,T) = -\frac{|C|}{|G|} \sum_{\chi} \bar{\chi}(g)\frac{1}{2\pi i} \int\limits_{\sigma_0-iT}^ {\sigma_0 + iT}\frac{L'}{L}\left(s-{\theta}/{2}-i\theta\tau,\chi,L/E\right)\frac{x_{j+1}^s-x_j^s}{s} ~ds,
\end{equation*} where $\chi$ runs through irreducible characters of $H= \langle g\rangle$, $T'\leq\tau \leq 2T'$ is fixed, and $T = \theta T'/2$.
We make the change of variable $s \leftrightarrow s-\frac{\theta}{2} - i\theta\tau$ to rewrite 
$$I(x_j,T) = -\frac{|C|}{|G|} \sum_{\chi} \frac{\bar{\chi}(g)}{2\pi i} \int\limits_{1+\frac{1}{\log x}-iT-i\theta\tau}^ {1+ \frac{1}{\log x}+ iT-i\theta\tau}\frac{L'}{L}\left(s,\chi,L/E\right)\frac{x_{j+1}^{s+\theta/2+i\theta\tau}-x_j^{s+\theta/2+i\theta\tau}}{s+\theta/2+i\theta\tau} ~ds.$$
Abbreviating $\frac{L'}{L}(s,\chi,L/E)$ by $\frac{L'}{L}(s,\chi)$, we evaluate for each character $\chi$ of $H$, the integral
\begin{equation*}\label{I_chi-defn}
	I_{\chi}(x_j,T) := \frac{1}{2\pi i} \int\limits_{1+\frac{1}{\log x}-iT-i\theta\tau}^ {1+ \frac{1}{\log x}+ iT-i\theta\tau}\frac{L'}{L}(s,\chi)\frac{x_j^{s+\theta/2+i\theta\tau}}{s+\theta/2+i\theta\tau} ~ds
\end{equation*} for each $j=0,1,\ldots, B$. We may assume that $T+\theta\tau$ and $ T-\theta\tau$ don't coincide with the imaginary part of a zero of any of the $L(s,\chi)$. To estimate this integral, we move the line of integration and consider the integral over a rectangle and apply Cauchy's theorem. More specifically, for $J:= m+ \frac{1}{2}$ where $m$ is a non-negative integer, let $B_{T,J,\theta}$  be the positively oriented rectangle with vertices at $1+\frac{1}{\log x} -i(T+\theta\tau), \, 1+\frac{1}{\log x} +i(T-\theta\tau),\,  -J-\frac{\theta}{2} +i(T-\theta\tau)$ and $-J-\frac{\theta}{2} -i(T+\theta\tau).$ Observe that this box does not intersect the real-axis, because $T-\theta\tau <0$ for all $\tau \in [T',2T']$. Define 
\begin{equation*}
	I_{\chi}(x_j, T, J) := \frac{x_j^{\theta/2 + i\theta\tau}}{2 \pi i}\int_{B_{T,J,\theta}} \frac{L'}{L}(s,\chi)\frac{x_j^s}{s+\theta/2 +i\theta\tau}~ds.
\end{equation*}
Now we estimate the error term 
\begin{align}\label{R-chi-defn}
R_{\chi}(x_j,T,J) := I_{\chi}(x_j,T,J)-I_{\chi}(x_j,T)
\end{align}
uniformly for each $j=0,\ldots, B$. Here, the error $R_{\chi}(x_j,T,J)$ consists of sum of one vertical integral $V_{\chi}(x_j,T,J)$, and two horizontal integrals $H_{\chi}(x_j,T,J) $ and $H^*_{\chi}(x_j,T)$ which we now estimate, following the line of proof in \cite[Section 6, Lemma 6.2]{L-O}. We deduce
 \begin{align*}
		V_{\chi}(x_j,T,J) &:= \frac{1}{2\pi i} \int\limits_{T}^{-T} \frac{x_j^{-J+it}}{-J+it} \frac{L'}{L}(-J-\theta/2+i(t-\theta\tau),\chi) ~dt 	\\	
		&\ll \frac{x^{-J}}{J}T\left( \log A(\chi) + n_E \log(|T+\theta\tau| + |J+\theta/2| \right);
	\end{align*} 
 \begin{align*}
		H_{\chi}(x_j,T,J) 
		&:=\frac{x_j^{\theta/2+i\theta\tau}}{2\pi i} \int\limits_{-J-\theta/2}^{-1/4} \frac{x_j^{\sigma -iT}}{\sigma+\theta/2-iT} \frac{L'}{L}(\sigma-i(T+\theta\tau), \chi)~d\sigma \\ 
		&\quad - \frac{x_j^{\theta/2+i\theta\tau}}{2\pi i} \int\limits_{-J-\theta/2}^{-1/4}\frac{x_j^{\sigma +iT}}{\sigma+\theta/2+iT} \frac{L'}{L}(\sigma+i(T-\theta\tau), \chi)  ~d\sigma \\ 
		& \ll \frac{x^{-1/4 +\theta/2}}{T\log x} \left( \log A(\chi) + n_E\log|T+\theta\tau| \right);
	\end{align*} 
	and lastly
 \begin{align*}
		H^*_{\chi}(x_j,T) 
		&:= \frac{x_j^{\theta/2+i\theta\tau}}{2\pi i} \int\limits_{-1/4}^{1+1/\log x}  \frac{x_j^{\sigma -iT}}{\sigma+\theta/2-iT} \frac{L'}{L}(\sigma-i(T+\theta\tau), \chi)~d\sigma \\ & \quad- \frac{x_j^{\theta/2+i\theta\tau}}{2\pi i} \int\limits_{-1/4}^{1+1/\log x}\frac{x_j^{\sigma +iT}}{\sigma+\theta/2+iT} \frac{L'}{L}(\sigma+i(T-\theta\tau), \chi)  ~d\sigma \\
		&= \frac{1}{2\pi i} \int_{-1/4}^{1+1/\log x}\frac{ x_j^{\sigma +\theta/2 -iT}}{\sigma+\theta/2-iT} \sum_{\substack{\rho \\ |\gamma-(T+\theta\tau)|\leq 1}} \frac{~d\sigma}{\sigma-i(T+\theta\tau)-\rho}  \\
		&\quad - \frac{1}{2\pi i} \int_{-1/4}^{1+1/\log x}\frac{ x_j^{\sigma +\theta/2 +iT}}{\sigma+\theta/2+iT} \sum_{\substack{\rho \\ |\gamma+(T-\theta\tau)|\leq 1}} \frac{d\sigma}{\sigma+i(T-\theta\tau)-\rho}\\
		& \quad + \bigO\left(\frac{x^{1+\theta/2}}{T\log x}(\log A(\chi) +n_E\log|T+\theta\tau|)\right),
	\end{align*}
where the sum above is taken over the non-real zeros $\rho$ of $L(s,\chi)$ and $A(\chi)= d_EN_{E/\Q}(f_\chi)$, $f_\chi$ being the conductor of $\chi$. The proof of Lemma $6.3$ in \cite{L-O} can be modified to show that
\begin{align*}
 &\int\limits_{-1/4}^{1+1/\log x}\frac{ x_j^{\sigma +\theta/2 -iT}}{\sigma+\theta/2-iT} \sum_{\substack{\rho \\ |\gamma-(T+\theta\tau)|\leq 1}} \frac{1}{\sigma-i(T+\theta\tau)-\rho}~d\sigma \\
 & \hspace{1cm} \ll  \frac{x^{1+1/\log x}}{T\log x} n_\chi(T+\theta\tau)\\
& \hspace{1cm} \ll \frac{x^{1+\theta/2}\log x}{T}(\log A(\chi)+ n_E\log|T+\theta\tau|).
\end{align*}
Here $n_\chi(t)$ denotes the number of zeros $\rho=\beta+i\gamma$ of $L(s,\chi)$ with $0<\beta <1$ and $|\gamma-t|\le 1$. A similar estimate holds for the sum over $\rho$ with $|\gamma+ (T-\theta\tau)|\leq 1$. Therefore, for each $j=0, 1, \ldots, B$, $$H_\chi^*(x_j,T)\ll \frac{x^{1+\theta/2}\log x}{T} \left( \log A(\chi) + n_E\log|T+\theta\tau| \right).$$
Note that the estimate for $H_{\chi}(x_j,T)$ is bounded above by the estimate for $H_\chi^*(x_j,T)$. Therefore, from \eqref{R-chi-defn},
\begin{align}\label{R-chi-estimates}
	\nonumber R_{\chi}(x_j,T,J) &\ll \frac{x^{1+\theta/2}\log x}{T} \left( \log A(\chi) + n_E\log|T+\theta\tau| \right)\\ & + \frac{x^{-J}}{J}T\left( \log A(\chi) + n_E \log(|T-\theta\tau| + |J+\theta/2| \right).
\end{align}
We remark here that one only needs to consider the first term above; the second term goes to zero as $J\to \infty$. Next, by Cauchy's theorem, $I_{\chi}(x_j,T,J)$ is the sum of the residues at the poles of the integrand inside $B_{t,J}$. For our specified contour, the poles occur only at the non-real zeros of $L(s,\chi)$, and at $s=-\theta/2-i\theta\tau$. 
This gives
\begin{equation}\label{Residues}
	I_{\chi}(x_j,T,J) = \sum\limits_{|\gamma +\theta\tau|<T} \frac{x_j^{\rho+\theta/2+i\theta\tau}}{\rho+\theta/2+i\theta\tau} + \frac{L'}{L}(-\theta/2 -i\theta\tau).
\end{equation} 
The term $\frac{L'}{L}(-\theta/2 -i\theta\tau)$ is estimated using the following lemma which is a slightly general version of Lemma 6.2 in \cite{L-O} and can be proved essentially using the same arguments, so we omit the details here. 
\begin{lemma}
	If $s=\sigma+it$ with $\sigma\leq -\theta/2$ and $|s+m| \geq \theta/2$ for all non-negative integers $m$, then $$\frac{L'}{L}(s,\chi) \ll \log A(\chi) +n_E\log(|s|+2).$$
\end{lemma}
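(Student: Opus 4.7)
The plan is to derive this bound by invoking the functional equation of the Hecke $L$-function $L(s,\chi,L/E)$, exactly as in the proof of \cite[Lemma 6.2]{L-O}, with small modifications to accommodate the weaker hypothesis on $\sigma$ and the pole-avoidance radius $\theta/2$ in place of a fixed constant. First I would introduce the completed $L$-function
$$\Lambda(s,\chi) = A(\chi)^{s/2}\gamma(s,\chi) L(s,\chi),$$
where $\gamma(s,\chi)$ is a product of $n_E$ archimedean Gamma factors (of the form $\Gamma_{\R}(s)$, $\Gamma_{\R}(s+1)$ or $\Gamma_{\C}(s)$, depending on whether the infinite places of $E$ are real or complex and on the local character components), and take logarithmic derivatives of the functional equation $\Lambda(s,\chi)=W(\chi)\Lambda(1-s,\bar\chi)$. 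This yields
$$\frac{L'}{L}(s,\chi) \;=\; -\log A(\chi) \;-\; \frac{\gamma'}{\gamma}(s,\chi) \;-\; \frac{\gamma'}{\gamma}(1-s,\bar\chi) \;-\; \frac{L'}{L}(1-s,\bar\chi),$$
so it suffices to bound each of the three remaining terms on the right.

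Next I would estimate these three pieces separately. For the last term, the hypothesis $\sigma\le -\theta/2$ gives $\Re(1-s)\ge 1+\theta/2>1$, so the Dirichlet series for $L'/L$ converges absolutely and is dominated termwise by the corresponding series for $-\zeta_E'/\zeta_E$ at $1+\theta/2$; this produces a bound of size $O(n_E)$. For the two Gamma-factor logarithmic derivatives, I would apply Stirling's formula in the standard form $\Gamma'(z)/\Gamma(z) \ll \log(|z|+2)$, valid whenever $z$ stays a positive distance from the non-positive integers. On the right-hand side (the $1-s$ term) this separation is automatic since $\Re(1-s)>1$; on the left-hand side (the $s$ term) it is precisely the hypothesis $|s+m|\ge \theta/2$ for all non-negative integers $m$ that keeps $s/2$ (or $(s+1)/2$) away from the poles of the Gamma factors. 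Summing the resulting bound $\ll \log(|s|+2)$ over the $n_E$ archimedean factors gives $|\gamma'/\gamma(s,\chi)| + |\gamma'/\gamma(1-s,\bar\chi)| \ll n_E \log(|s|+2)$.

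Combining the three estimates produces
$$\left|\frac{L'}{L}(s,\chi)\right| \;\ll\; \log A(\chi) + n_E + n_E\log(|s|+2) \;\ll\; \log A(\chi) + n_E\log(|s|+2),$$
which is the claimed bound. The only point requiring care compared to the original argument is verifying that the reduced pole-separation radius $\theta/2$ is still sufficient to invoke Stirling uniformly. Since $\theta\in[0,1]$ is a fixed parameter in the setup of Section~\ref{section-Balog} and Stirling's bound for $\Gamma'/\Gamma$ requires only a strictly positive distance from the negative real integers (with an implied constant that may depend on this distance), this presents no genuine obstacle; it is the sole way in which the generalization differs from \cite[Lemma 6.2]{L-O}.
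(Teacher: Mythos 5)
Your argument is exactly the one the paper has in mind: the paper omits the proof, stating only that the lemma is a slight generalization of \cite[Lemma 6.2]{L-O} provable ``essentially using the same arguments,'' and your functional-equation-plus-Stirling derivation, with the pole-avoidance radius $1/4$ replaced by $\theta/2$ (and implied constants allowed to depend on the fixed $\theta$), is precisely that argument. No gaps; this is correct and essentially identical in approach.
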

Applying these bounds, we get 
\begin{equation}\label{L'/L-pole}
    \frac{L'}{L}(-\theta/2 -i\theta\tau) \ll \log A(\chi) + n_E\log(|\theta/2 +i\theta\tau| +2).
\end{equation}
Using \eqref{L'/L-pole}, \eqref{Residues} and \eqref{R-chi-estimates} in \eqref{R-chi-defn}, we have
\begin{align*}
	I_{\chi}(x_j,T) &= \sum\limits_{|\gamma +\theta\tau|<T} \frac{x_j^{\rho+\theta/2+i\theta\tau}}{\rho+\theta/2+i\theta\tau} \\ & \quad + \bigO\left( \left( \frac{x^{1+\theta/2}\log x}{T} +1 \right) \left( \log A(\chi) + n_E\log|T+\theta\tau| \right) \right).
\end{align*}
We plug this into the definition of $I(x_j,T)$ to obtain
\begin{align}\label{I_x}
\nonumber	&I(x_j,T) - S(x_{j+1}, T) + S(x_j,T)\\
& \nonumber \ll \frac{|C|}{|G|}\left( \frac{x^{1+\theta/2}\log x}{T} +1 \right) \sum_{\chi} \left( \log A(\chi) + n_E\log|T+\theta\tau| \right)\\
	& \ll \frac{|C|}{|G|} \left( \frac{x^{1+\theta/2}\log x}{T} +1 \right)(\log d_L + n_L\log |T+\theta\tau|), 
\end{align} where 
$$S(y,T) := \frac{|C|}{|G|} \sum_{\chi}\bar{\chi}(g)  \sum\limits_{|\gamma +\theta\tau|<T} \frac{y^{\rho+\theta/2+i\theta\tau}}{\rho+\theta/2+i\theta\tau}.$$
Under our assumption of a zero-free region $\Re s>1- \Delta$ where $\theta/2< \Delta \le 1/2$, and with slight modification to the calculation in the proof of Theorem $9.1$ of \cite{L-O}, and summing over $\chi$, we get 
\begin{align*}
	S(x_j,T) \ll \frac{|C|}{|G|}x^{1-\Delta +\theta/2}\log T (\log d_L + n_L\log |T+\theta\tau|)
\end{align*}
Finally, using the above bounds in \eqref{I_x} and recalling \eqref{R_1}, we conclude 
\begin{align*}
	\Psi_C(-\theta/2 -i\theta\tau) 
     &\ll  \frac{|C|}{|G|} x^{1-\Delta +\theta/2} 
    \left(\log d_L + n_L\log |T+\theta\tau|\right) \left(\log T +\frac{x^{\Delta}}{T}\log x\right) \\
	&\quad + x^{\theta/2} \log x\left(\log d_L + n_K + n_K\frac{ x \log x}{T}\right).
\end{align*}
Using partial summation and $\tau\leq 2T'$, and setting $T= \theta T'/2$, one gets the desired bounds for $F$ in the case of a fixed conjugacy class $C$. 

Next, suppose $C = \cup_{m=1}^b C_m$ is a union of $b$ conjugacy classes for some integer $b\geq 1$. Then, using the estimates established for each conjugacy class $C_m$ and summing over $m$, we have 
 \begin{align*}
	F(-\theta/2 -i\theta\tau )  
	& \ll  \frac{\sum_{m=1}^b|C_j|}{|G|} x^{1-\Delta +\theta/2}\bigg(\log d_L + n_L\log T'\bigg) \left(\frac{\log T'}{\log x} + \frac{x^{\Delta}}{T'}\right)\\
	&\quad + x^{\theta/2}  \left( \log d_L + b n_K + bn_K\frac{x \log x}{T'}\right)\\
	& {} \\
	& = \frac{|C|}{|G|} x^{1-\Delta +\theta/2} \bigg(\log d_L + n_L\log T'\bigg) 
	\left(\frac{\log T'}{\log x} + \frac{x^{\Delta}}{T'}\right)\\
    &\quad + x^{\theta/2} \left( \log d_L + bn_K + bn_K\frac{x \log x}{T'} \right),
\end{align*}
noting that the error term in \eqref{ram-primes-error} that contributes $x^{\theta/2}\log d_L$ remains unchanged whether $C$ is a single class or a union of conjugacy classes. This completes the proof of the proposition.
\end{proof}

\bibliographystyle{alpha}

\bibliography{MathematikaSub.bib}

\end{document}